\documentclass{article}




    \usepackage[final]{neurips_2025}


\usepackage[utf8]{inputenc} 
\usepackage[T1]{fontenc}    
\usepackage{hyperref}       
\usepackage{url}            
\usepackage{booktabs}       
\usepackage{amsfonts}       
\usepackage{nicefrac}       
\usepackage{microtype}      
\usepackage{xcolor}         

\title{Distributionally Robust Performative Optimization}

%

\author{
 Zhuangzhuang Jia\textsuperscript{1},\quad Yijie Wang\textsuperscript{2},\quad Roy Dong\textsuperscript{1},\quad Grani A. Hanasusanto\textsuperscript{1}\vspace{3pt} \\
  \textsuperscript{1}Department of Industrial and Enterprise Systems Engineering\\
  University of Illinois Urbana–Champaign\\
  \textsuperscript{2}School of Economics and Management, Tongji University \vspace{3pt} \\
  \texttt{\small \{zj12,roydong,gah\}@illinois.edu, \quad yijiewang@tongji.edu.cn }\\ 
}

\usepackage{amsmath}
\usepackage{amsthm}
\usepackage{mathtools}
\usepackage{dsfont}
\usepackage{cleveref}
\usepackage{enumitem}  
\usepackage[normalem]{ulem}
\usepackage{algorithm}
\usepackage{algpseudocode}
\usepackage[skip=0pt]{caption}
\usepackage{wrapfig}
\usepackage{caption}
\usepackage{subcaption}

\newcommand{\mcal}[1]{\mathcal{#1}}
\newcommand{\bm}{\boldsymbol}
\newcommand{\diff}[1]{\mathrm{d}{#1}}
\DeclareMathOperator{\tr}{tr}

\newtheorem{theorem}{Theorem}
\newtheorem{model}{Model}
\newtheorem{lemma}{Lemma}[section]
\newtheorem{proposition}{Proposition}
\newtheorem{corollary}{Corollary}[section]
\theoremstyle{definition}
\newtheorem{definition}{Definition}

\DeclareMathOperator*{\argmin}{arg\,min}
\DeclareMathOperator*{\argmax}{arg\,max}

\DeclareMathOperator{\st}{s.t.}
\def\J{{\mathbb{J}}}
\def\R{{\mathbb{R}}}

\def\S{{\mathbb{S}}}

\DeclareMathOperator{\E}{\mathbb{E}}
\def\P{{\mathbb{P}}}
\def\Q{{\mathbb{Q}}}
\newcommand{\BB}{\mathds{B}}
\newcommand{\Wass}{\mathds W}

\newcommand{\DRPR}{\mathrm{DRPR}}
\newcommand{\plossR}[2]{\E_{#1}\ell(\bm {\tilde z}, #2)}
\newcommand{\thetaRPO}{{\bm \theta_{\mathrm{RPO}}}}
\newcommand{\thetaRPS}{{\bm \theta_{\mathrm{RPS}}}}

\newcommand{\thetaRPObar}{ { \bm {\overline \theta}_{\mathrm{RPO}}}
}
\newcommand{\thetaRPSbartau}{ { \bm {\overline \theta}^{\tau}_{\mathrm{RPS}}}
}
\newcommand{\thetaRPObartau}{ { \bm {\overline \theta}^{\tau}_{\mathrm{RPO}}}
}

\newcommand{\thetaRPSmu}{ { \bm { \theta}^{\mu}_{\mathrm{RPS}}}
}

\newcommand{\thetaRPOtau}{ { \bm { \theta}^{\tau}_{\mathrm{RPO}}}
}

\newcommand{\thetaRPStau}{ { \bm { \theta}^{\tau}_{\mathrm{RPS}}}
}

\begin{document}

\maketitle

\begin{abstract}
In performative stochastic optimization, decisions can influence the distribution of random parameters, rendering the data-generating process itself decision-dependent. In practice, decision-makers rarely have access to the true distribution map and must instead rely on imperfect surrogate models, which can lead to severely suboptimal solutions under misspecification. Data scarcity or costly collection further exacerbates these challenges in real-world settings. To address these challenges, we propose a distributionally robust framework for performative optimization that explicitly accounts for ambiguity in the decision-dependent distribution. Our framework introduces three modeling paradigms that capture a broad range of applications in machine learning and decision-making under uncertainty. This latter setting has not previously been explored in the performative optimization literature. To tackle the intractability of the resulting nonconvex objectives, we develop an iterative algorithm named repeated robust risk minimization, which alternates between solving a decision-independent distributionally robust optimization problem and updating the ambiguity set based on the previous decision. This decoupling ensures computational tractability at each iteration while enhancing robustness to model uncertainty. We provide reformulations compatible with off-the-shelf solvers and establish theoretical guarantees on convergence and suboptimality. Extensive numerical experiments in strategic classification, revenue management, and portfolio optimization demonstrate significant performance gains over state-of-the-art baselines, highlighting the practical value of our approach.
\end{abstract}

\section{Introduction}
Decision-makers' actions often have a ripple effect on the external environment, which can lead to changes in the distribution of uncertain parameters.  For instance, in the realm of portfolio management, institutional investors' decisions can have a profound effect on stock prices. This is partly due to their substantial capital, which can propel stock prices to rise (or fall) when they are buying (or selling)~\cite{wermers1999mutual}, and partly because their actions shape market sentiment towards those particular stocks~\cite{baker2006investor}. Similarly, in revenue management, airlines often make forecasts and design pricing strategies based on the historical patterns of passenger behavior. However, passengers are not static; they often modify their behaviors in reaction to the new pricing strategies of airlines, which in turn shifts the overall pattern of consumer behavior~\cite{calmon2021revenue}.

When the solution of a stochastic optimization problem affects the distribution of the uncertain parameters, we call such a problem \emph{performative}~\cite{perdomo2020performative,drusvyatskiy2023stochastic}. The primary goal of the decision maker within such a dynamic environment is to find a decision that minimizes the expected risk after the environment has reacted to its deployment. A natural way of introducing this decision-dependent uncertainty is to construct a distributional map from the set of decisions to the space of distributions. However, the true underlying map is typically unknown in reality.  Consequently, practitioners usually rely on a nominal or reference distribution map constructed from historical observations or domain expertise. While methods grounded in the reference distribution map might perform satisfactorily on the observed samples, they often fail to achieve acceptable performance in out-of-sample circumstances.

This paper aims to tackle this core deficiency by leveraging the ideas of \emph{Distributionally Robust Optimization}. Unlike traditional approaches that assume a single distribution map, distributionally robust performative optimization (DRPO) adopts a more flexible strategy: it establishes an ambiguity set of plausible distributions centered at the reference distribution. Next, the objective of the decision maker is to derive an optimal decision that minimizes the worst-case expected risk, where the worst case is taken over all distribution maps from within this ambiguity set. By optimizing against this adversarial perspective within a neighborhood of the reference map, it  mitigates the overfitting issue and improves the out-of-sample performance.

The difficulty in solving DRPO stems from the dependence of the distribution map on the decision, which prohibits the direct use of existing solution schemes from the robust and distributionally robust optimization literature. For instance, even under the simplest setting where the loss function is linear and the distribution map is linear in decisions, the resultant problem is a nonconvex bilinear program. We address this challenge by designing a repeated robust risk minimization algorithm. Specifically, the decision maker repeatedly obtains an optimal decision that minimizes DRPR risk using the reference distribution from the previous iteration. Finally, we conduct a theoretical analysis of the algorithm, providing convergence and sub-optimality guarantees.
Our main contributions can be summarized as follows:
\begin{enumerate} [leftmargin=*]
    \item \textbf{General DRPO Framework}: We propose a distributionally robust framework based on Wasserstein distance for performative optimization. This approach enables safe decision-making when we lack full information about the underlying decision-dependent distributions and only have access to some reference distributions. It encompasses a wide class of loss functions and can be applied to numerous machine learning and decision-making problems. As a byproduct of our reformulations, we identify a relatively general setting where the distributionally robust model is equivalent to Tikhonov regularization.
    \item \textbf{Repeated Robust Risk Minimization Algorithm}:  We develop a repeated robust risk minimization algorithm for the problem that effectively mitigates the intractability of decision-dependent uncertainty. Our approach decouples the decisions associated with the ambiguity set from the expected loss, optimizing the latter while fixing the former to the previous decision in each iteration. This transforms the challenging DRPO problem into a sequence of tractable conic programs, rendering the framework computationally feasible and amenable to solutions via off-the-shelf solvers.
    \item \textbf{Rigorous Theoretical Guarantees:} We provide convergence results of the repeated robust risk minimization algorithm to the stable solutions for our proposed models. To our knowledge, the convergence of such an algorithm has not previously been established in the distributionally robust settings. Our results show that, in general, the distributionally robust models lead to faster convergence than non-robust schemes. We further prove that our stable solutions are near to the robust performatively optimal ones.   
\end{enumerate}

\subsection{Related Work}
\textbf{Stochastic Optimization:} Our study closely relates to a body of research delving into endogenous uncertainty in stochastic optimization. Early examples include production planning problems with decision-dependent production costs~\cite{jonsbraaten1998class} and oil planning problem with decision-dependent information discovery~\cite{goel2004stochastic}. Their solution entails constructing a scenario-tree-based stochastic programming model and implementing a decomposition algorithm for resolution. Building on this, a subsequent study~\cite{goel2006class} extends the methodology to the multi-stage setting, focusing on production scheduling that minimizes costs while fulfilling diverse product demands. More broadly, ~\cite{vayanos2011decision} explore general multistage stochastic optimization problems beset with endogenous uncertainty, devising a conservative solution framework by leveraging piecewise linear decision rule approximations.

\textbf{Robust Optimization:} In robust optimization, decision-dependent uncertainty is usually imposed directly on the uncertainty set. Specific applications include customized endogenous uncertainty sets for software partitioning~\cite{spacey2012robust} and robust scheduling, where the uncertainty set is constructed as a decision-dependent combination of simpler sets~\cite{vujanic2016robust}. Decision-dependent sets have also been designed for primitive uncertainties in control systems~\cite{zhang2017robust}. The complexity analysis of the robust selection problem with decision-dependent information discovery is studied by~\cite{michel2022robust}, where they present polynomial complexity results for two special cases. For general settings, ~\cite{nohadani2018optimization} show the problem is NP-complete and demonstrate the benefit of using endogenous uncertainty sets via a shortest-path problem. Algorithmic approaches have been developed, including exact nested decomposition schemes for two-stage problems~\cite{paradiso2022exact} and approximation methods based on decision rules for the multistage setting~\cite{zhang2020unified}.

\textbf{Distributionally Robust Optimization:} Our research expands upon the conventional framework of distributionally robust optimization by incorporating the influence of decision-making on the probability distribution.  However, due to the problem difficulty, decision-dependent ambiguity sets generally lead to intractable reformulations, which could be computationally intensive for large instances. ~\cite{zhang2016quantitative} study a broad class of distributionally robust optimization problems with decision-dependent moment ambiguity sets and conduct stability analysis. For the two stage setting, ~\cite{luo2020distributionally}  analyze a wide range of decision-dependent ambiguity sets and establish non-convex semi-infinite reformulations and ~\cite{jin2024distributionally} explore decision-dependent information discovery using the K-adaptability approximation scheme. In multistage settings,~\cite{yu2022multistage} focus on moment-based ambiguity sets and derive a mixed-integer semidefinite programming reformulation. This decision-dependent DRO framework has been applied to a wide range of operations management problems, including nurse staffing~\cite{ryu2019nurse}, facility location~\cite{basciftci2021distributionally}, and retrofitting planning~\cite{doan2022distributionally}. In contrast to the existing approaches that require an explicit specification of the ambiguity set, our scheme relies only on reference distributions at finitely many decisions.

\textbf{Performative Supervised Learning:} Our study also belongs to an emerging class of research on performative supervised learning, where algorithmic predictions can actively mold the surrounding environment and alter the underlying distributions of uncertain parameters~\cite{drusvyatskiy2023stochastic,hardt2022performative,miller2021outside,perdomo2020performative,mendler2020stochastic}. The origins of performative learning can be traced back to studies on supervised learning under distribution drifts~\cite{bartlett1992learning,bartlett1996learning,gama2014survey}. The foundational framework of performative prediction was introduced by~\cite{perdomo2020performative},  who designed a retraining algorithm and analyze the convergence behaviors to stochastic performatively stable points. Unfortunately, the convergence results do not extend to the robust settings, as taking worst-case expectations introduces non-smoothness into the objective function. Gradient-based methods have been developed in \cite{perdomo2020performative,mendler2020stochastic,drusvyatskiy2023stochastic} for non-robust formulations, but they are inapplicable in robust settings due to the intractability of computing gradients of worst-case expectations.  ~\cite{kim2023making} has extended the scope of performative prediction to decision-making via performative omniprediction.  We refer readers who are interested in performative learning to a comprehensive review~\cite{hardt2023performative}.

Finally, we would like to highlight the difference between our paper and two related works on distributionally robust performative prediction.  ~\cite{peet2022long} propose a distributionally robust performative model to promote machine learning fairness. However, their robust smoothness assumption on the objective and robust sensitivity assumption on the worst-case distributions are unrealistic. Additionally, unlike the Wasserstein ambiguity set adopted in our paper, their phi-divergence ambiguity set precludes any continuous distribution, so it cannot ensure true probability distribution coverage guarantee. \citep{xue2024distributionally} study distributionally robust performative prediction based on a KL-divergence ambiguity set. Similar to~\cite{peet2022long}, such an ambiguity set may fail to provide sufficient protections against distributions whose scenarios do not coincide with those in the reference distribution. In the paper, the authors develop an alternating minimization (coordinate descent) algorithm to solve the distributionally robust model. Unfortunately, such an algorithm may fail to converge, and the paper also does not provide any convergence guarantee. The algorithm also assumes the existence of a solver that can solve performative risk-sensitive minimization problems; however, to our knowledge, there is no existing literature that studies such problems and provides convergent algorithms.

\subsection{Notation and Terminology}\label{sec:note}
We use $\R_{+}$ and $\R_{++}$ to denote the sets of nonnegative and strictly positive real numbers, respectively. The identity matrix is denoted by $\mathbb I$. For any $n\in\mathbb N$, we define $[n]$ as the index set $\{1,\ldots,n\}$. The Dirac measure concentrating unit mass at $\bm \xi \in \Xi$ is denoted by $\delta_{\bm \xi}$. 
For any real-valued matrix $\bm A$, its Schatten-$q$ norm is defined as $\|\bm A\|_q=(\tr(\bm A^\top \bm A)^{q/2})^q$.  Random variables are designated with tilde signs (e.g., $\tilde {\bm z}$), while their realizations are denoted by the same symbols without tildes  (e.g., $\bm z$).

\section{Preliminaries}
In stochastic optimization, the goal of the decision maker is to find a decision $\bm \theta \in \bm \Theta \subseteq \R^d$ 
that minimizes the risk
\begin{equation}\label{eq:risk}
\textup{R}(\bm \theta) =  \E_{\P}[\ell(\bm {\tilde z}, \bm \theta)],
\end{equation}
where the vector $\bm {\tilde z} \in\mathcal Z\subseteq \R^m$
comprises the random parameters, and $\P$ denotes the underlying distribution. We assume that the loss function is convex in $\bm\theta$ for any fixed $\bm z\in\mathcal Z$. When the solution of a stochastic optimization problem affects the distribution of the uncertain parameters, we call such problem \emph{performative}~\cite{perdomo2020performative,drusvyatskiy2023stochastic}. A natural way of introducing this decision-dependent uncertainty is through a map $\P(\cdot)$ from the set of decisions to the space of distributions. Hence, in performative stochastic optimization problems, the objective of the decision maker is to obtain a decision $\bm \theta$ that minimizes the \emph{performative risk}
\begin{equation}\label{eq:performative_risk}
\textup{PR}(\bm \theta) =  \E_{\P(\bm \theta)}[\ell(\bm {\tilde z},\bm \theta)],
\end{equation}
where $\P(\bm \theta)$ is the distribution of uncertain parameters $\bm {\tilde z}$ given the choice of the decision $\bm \theta$. Unfortunately, the true underlying distribution map $\P(\bm \theta)$ is unknown to the decision makers and usually approximated using the reference distribution $\hat{\P}(\bm \theta)=\sum_{s\in[S]}\hat p_s(\bm \theta)\delta_{ \hat z_s(\bm \theta)}$, where $\{\hat z_s(\bm \theta)\}_{s\in[S]}$ represent plausible scenarios and $\{\hat p_s(\bm \theta)\}_{s\in[S]}$ are their respective probabilities. This reference distribution could be derived from observations or expert knowledge elicitation. 

Although methods based on a reference distribution may perform well when the true distribution closely aligns with it, their performance often deteriorates when this assumption is violated. \textit{Distributionally robust optimization} has proven effective in addressing such distributional ambiguity in non-performative stochastic optimization settings. Unlike traditional approaches that assume a single distribution map, distributionally robust optimization adopts a more flexible strategy: it establishes an ambiguity set $\BB (\hat{\P}(\bm \theta))$ of distributions that are close to the reference distribution. In this paper, we construct this ambiguity set based on the Wasserstein distance. A formal definition of the Wasserstein distance is as follows. 

\begin{definition}[Wasserstein metric] 
\label{def:wass}
For any $r \geq 1$, let $\mcal M(\mathcal Z)$ be the space of all probability distributions $\Q$ supported on $\mathcal Z$ satisfying $\E_\Q[ c(\tilde{\bm z},\bm z_0 )^r] = \int_{\Xi} c(\bm z, \bm z_0 )^r \Q(d \bm z) < \infty$, where $\bm z_0 \in \mathcal Z$ is some reference point and $c(\bm z, \bm z_0 )$ is a non-negative, continuous and thus lower semi-continuous~\citep{villani2021topics} reference metric on $\mathcal Z$. The type-$r$ $(1 \leq r )$ Wasserstein distance between two distributions $\Q_1$ and $\Q_2$ is defined as
\begin{equation*}
	\label{eq:wasserstein}
\Wass_{r}(\Q_1, \Q_2) = \inf_{\pi \in \Pi(\Q_1, \Q_2)} \left( \int_{\mathcal Z \times \mathcal Z} c(\bm z_1, \bm z_2)^r \, \pi (\diff{\bm z_1}, \diff{\bm z_2}) \right)^{\frac{1}{r}},
\end{equation*}
where $\Pi(\Q_1 \times \Q_2)$ is the set of all joint probability distributions of random vectors $\bm z_1$ and $\bm z_2$ with marginals $\Q_1$ and $\Q_2$, respectively. 
\end{definition}
The Wasserstein metric offers a natural way of comparing two distributions when one is derived from the other by small perturbations. The decision variable $\pi$ can be interpreted as a transportation plan for moving a mass distribution denoted by $\Q_1$ to another one denoted by $\Q_2$, where the transportation cost between two points $\bm z_1$ and $\bm z_2$ is given by $c(\bm z_1, \bm z_2)$. Therefore, the $r$-Wasserstein distance can be viewed as the $r$-th root of the minimum transportation cost between $\Q_1$ and $\Q_2$. An important advantage of the Wasserstein distance is its ability to handle distributions with non-overlapping supports, i.e., even when $\Q_1$ and $\Q_2$ have different supports, the Wasserstein distance provides a finite, meaningful value. By contrast, measures like KL-divergence become undefined under such scenarios. We now consider the following Wasserstein ambiguity set
\begin{equation} \label{eq:B-def}
    \BB (\hat{\P}(\bm \theta) ) = \left\{
        \Q \in \mcal M(\Xi) ~:~ \begin{array}{l}
            \Wass_r(\Q,  \hat{\P}(\bm \theta) ) \le \rho
        \end{array}
    \right\},
\end{equation}
which is a neighborhood around the reference distribution $\hat{\P}(\bm \theta) $. The Wasserstein ambiguity set contains all the distributions whose $r$-Wasserstein distance from $\hat{\P}(\bm \theta) $ is less than or equal to $\rho$.

Equipped with the Wasserstein ambiguity set, the decision maker minimizes the \emph{distributionally robust performative risk}
\begin{equation}\label{eq:DROperformative_risk}
    \DRPR(\bm \theta) \coloneqq \sup_{\Q \in \BB (\hat{\P}(\bm \theta))} \plossR{\Q}{\bm \theta},
\end{equation}
where $\Q$ is a distribution from within the prescribed ambiguity set  $\BB (\hat{\P}(\bm \theta))$. In other words, the model optimizes the expected risk over the worst-case distribution map, thereby mitigating overfitting to the reference distribution and improving generalization performance to other plausible distributions. We now introduce the following concepts regarding the optimality and stability of the solutions.

\begin{definition}[Robust performative optimality] 
\label{def:performative_optimum}
A decision $\thetaRPO$ is \emph{robust performatively optimal} if the following relationship holds:
\[\thetaRPO \in \argmin_{\bm \theta\in \bm \Theta} \sup_{\Q \in \BB (\hat{\P}(\bm \theta))} \plossR{\Q}{\bm \theta}.\]
\end{definition}
\begin{definition}[Robust performative stability]
A decision $\thetaRPS$ is \emph{robust performatively stable} if the following relationship holds:
\begin{equation*}
\thetaRPS \in \argmin_{\bm \theta\in \bm \Theta} \sup_{\Q \in \BB (\hat{\P}(\thetaRPS))} \plossR{\Q}{\bm \theta}.
\end{equation*}
While distinct from the robust performatively optimal solution, a robust performatively stable solution constitutes a fixed point of the problem and is optimal with respect to the worst-case expected loss over the ambiguity set it induces. 
\end{definition}

\begin{definition}[Robust decoupled performative risk] 
\label{def:decoupled_performative_risk}
We define 
\[ \J_{\bm \eta}( \bm \theta) \coloneqq \sup_{\Q \in \BB (\hat{\P}(\bm \eta))} \plossR{\Q}{\bm \theta}\] 
as the \emph{robust decoupled performative risk}, separating the decision $\bm \eta$ associated with the ambiguity set and the decision $\bm \theta$ associated with the risk; then, $\thetaRPS \in \argmin_{\bm \theta} \J_\thetaRPS(\bm \theta)$.
\end{definition}

\section{Repeated Robust Risk Minimization Algorithm}

In this section, we introduce the \emph{repeated robust risk minimization} algorithm for solving the distributionally robust performative risk minimization problem and investigate its fundamental properties. The algorithm starts with an initial solution $\bm \theta_0$, and for every $t\geq 0$, the subsequent $\bm \theta_{t+1}$ can be obtained by solving the following robust risk minimization problem:
\begin{equation}
\min_{\bm \theta \in \bm \Theta} \J_{\bm \theta_t}( \bm \theta). 
\tag{RRMP}
\label{eq:RRRM}
\end{equation}
\vspace{-1em}

The algorithm addresses the computational challenges posed by decision-dependent distributions by constructing the ambiguity set using the reference distribution based on the optimal decision from the previous iteration. Thus, it effectively decouples the current decision from the ambiguity set, simplifying the optimization process.

\subsection{Models and Their Tractable Reformulations}
We present three models that cover a broad spectrum of problems in machine learning and decision-making under uncertainty. We first consider the case where the loss function is given by the composition of a Lipschitz continuous function and a quadratic function. This class of problems is particularly relevant in machine learning and robust statistics, as it captures many commonly used models such as linear regression~\cite{montgomery2021introduction}, logistic regression~\cite{hosmer2013applied}, and certain types of support vector machines~\citep{shafieezadeh2019regularization}. 

\begin{model}
\label{thm:reformulation1}
Assume that the loss function is defined as \begin{equation}
\label{eq:Model1}
\ell (\bm Z,\bm \theta)=\mathcal L( {\bm \theta}^\top \bm Y {\bm \theta}+ 2 \bm z^\top\bm \theta + z^0),\end{equation}
where $\bm {\tilde Z} = (\bm {\tilde Y}, \bm {\tilde z}, \tilde z^0)$
includes random variables $\bm {\tilde Y} \in  \S^N, \bm {\tilde z} \in \R^d$ and $\tilde z^0 \in \R$. The function $\mathcal L(\cdot)$ is assumed to be $L$-Lipschitz continuous.

We consider the 1-Wasserstein ball, where the ground cost function $c$ is given by the Schatten-$\infty$ norm. Under this setting, \eqref{eq:RRRM} is equivalent to the Tikhonov regularized problem
    \begin{equation*}
\inf_{\bm \theta\in\bm\Theta}  \E_{ \hat{\P}(\bm \theta_t)} \left[\mathcal L\left( \bm \theta^\top \bm {\tilde Y} \bm \theta+ 2 \bm {\tilde z}^\top \bm \theta + { \tilde z^0}\right)\right] + \rho L\|(\bm \theta,1)\|_2^2 .
\end{equation*}
\vspace{-1.3em}
\end{model}

This result demonstrates that, under mild assumptions, \eqref{eq:RRRM} can be reformulated as a regularized risk minimization problem. This substantially generalizes the findings of \cite{li2022tikhonov}, who established an equivalence to Tikhonov regularization for strongly convex quadratic loss functions and martingale-constrained Wasserstein ambiguity sets. Our result reveals that Tikhonov regularization can be obtained for a broader class of loss functions without requiring complicating martingale constraints, thereby sharpening the theoretical understanding of the connection between distributional robustness and regularization.

The following model provides a general formulation that is typically considered in the distributionally robust optimization literature~\cite{rahimian2019distributionally,rahimian2022frameworks}. This model is pertinent to many applications in decision-making under uncertainty, including in inventory management~\cite{lee2021data} and and energy systems~\cite{kim2011optimal}. To the best of our knowledge, such a formulation has not previously been proposed in the performative optimization literature, which mainly focuses on prediction problems. Note that if the $L$-lipschitz continuous function $\mathcal L$ in  \eqref{eq:Model1} is piecewise linear, then this model constitutes a generalization.

\begin{model}
\label{thm:reformulation2}
Assume that the loss function is defined as \[\ell (\bm Z,\bm \theta)=\max_{j\in[J]}\; Q_j(\bm Z, \bm \theta),\] 
where $\bm {\tilde Z} = (\bm {\tilde Y}, \bm {\tilde z}, \tilde z^0)$ includes random variables $\bm {\tilde Y} \in  \S^N, \bm {\tilde z} \in \R^d$ and $\tilde z^0 \in \R$. Each component $Q_j(\bm Z, \bm \theta)$ is a quadratic function of the form
\[
Q_j(\bm Z, \bm \theta) = \bm a_j(\bm \theta)^\top \bm Y \bm a_j(\bm \theta)+ 2\bm b_j(\bm \theta)^\top \bm z + c_j(\bm \theta) z^0,
\]
with parameter-dependent coefficients given by affine functions 
     $\bm a_j(\bm \theta) = \bm {\overline a}_{j}+ \bm {\overline A}_j \bm \theta$, $\bm b_j(\bm \theta) = \bm {\overline b}_{j}+ \bm {\overline B}_j \bm\theta $, and $c_j(\bm \theta) = {\overline c}_{j0}+ \bm {\overline c}_j^\top \bm \theta$
for all $j\in [J]$, where $\bm {\overline a}_{j}, \bm {\overline b}_{j} \in\R^N, {\overline c}_{j0} \in \R, \bm {\overline A}_j, \bm {\overline B}_j \in \R^{N\times d}$, and $\bm {\overline c}_j \in\R^d$. 

Consider the 1-Wasserstein ball with the Schatten-$\infty$ norm ground cost. Then, the optimal value of the following exponential conic program provides an arbitrarily tight conservative approximation for~\eqref{eq:RRRM}. 
\begin{equation}
\label{opt:thm2}
\begin{array}{rcll}
&\inf&\displaystyle  \sum_{s\in [S]} \hat p_s(\bm \theta_t) t_s + t_{S+1} \\
     &\st & \displaystyle \bm \theta \in \bm \Theta, \;\; t_s\in\R & \forall s\in[S+1] \\
     && \zeta_{s,j}, r_{s,j} \in \R,\;\; \left( r_{s,j}, \mu, \zeta_{s,j} - t_s\right) \in  K_{\textnormal{exp}}   & \forall s\in [S+1]\; j\in[J] \\
     && \displaystyle \bm \theta^\top \bm {\overline A}_j^\top \bm {\hat Y}_s \bm {\overline A}_j \bm \theta + (2\bm {\overline a}_{j}^\top \bm {\hat Y}_s \bm {\overline A}_j + 2\bm {\hat z}_s^\top \bm {\overline B}_j + \hat z_s^0 \bm {\overline c}_j^\top) \bm \theta \\
     && \qquad \qquad + \bm {\overline a}_{j}^\top \bm {\hat Y}_s \bm {\overline a}_{j} +  2\bm {\overline b}_{j}^\top \bm {\hat z}_s  + \hat z_s^0 {\overline c}_{j0} \leq \zeta_{s,j} & \forall s\in[S+1]\; j\in[J]\\
    &&\displaystyle \sum_{j \in J} r_{s,j} \leq \mu  & \forall s\in[S+1] .
\end{array}
\end{equation}
where $\bm {\hat Y}_{S+1} = \rho \mathbb{I}, \bm {\hat z}_{S+1} = \bm 0, \hat z_{S+1}^0 = \rho$, and $\mu \in \R_+$ is the smoothing parameter.
\end{model}
The formulation~\eqref{opt:thm2} relies on the exponential smoothing techniques described in~\citep[Section 2.2]{bertsekas2015convex}. The primary motivation for this approach lies in the need to handle the nonsmoothness incurred by the inner maximization over quadratic functions, which will hinder the convergence of our proposed algorithm. To address this challenge, we apply a log-sum-exp smoothing approximation with smoothing parameter $\mu>0$, which yields a smoothed robust decoupled performative risk, denoted as $\J_{\bm \theta_t}^\mu$. And problem~\eqref{opt:thm2} is equivalent to $\inf_{\bm \theta \in \bm \Theta} \J_{\bm \theta_t}^\mu( \bm \theta)$ (see details in Appendix~\ref{app:reformulation}).

As is standard in exponential smoothing, the smoothed objective serves as a uniform upper bound to the original nonsmooth function:
  $\J_{\bm \theta_t}( \bm \theta)  \leq \J_{\bm \theta_t}^\mu(\bm \theta) \;\; \forall \bm \theta \in \bm \Theta$,
which ensures that optimization over the smooth surrogate does not underestimate the original objective. Importantly, $\J_{\bm \theta_t}^\mu$ epi-converges to $\J_{\bm \theta_t}$ as the smoothing parameter $\mu \downarrow 0$ \cite[Theorem 7.17]{rockafellar2009variational}, which ensures convergence of optimal solutions whenever $\bm\Theta$ is compact \cite[Theorem 7.33]{rockafellar2009variational}.

Finally, inspired by minimax~\cite{thekumparampil2019efficient} and adversarially robust optimization literature~\cite{sinha2017certifying}, we turn to the setting where the loss function is convex-concave. In this setting,  we exploit the 2-Wasserstein ambiguity set to ensure the convergence of the repeated risk minimization algorithm. This model further allows one to impose additional structural support information that may reduce the overconservatism of the distributionally robust solutions. 
The following theorem provides a convex reformulation for the model, leveraging convex conjugate representations and support functions.
\begin{model}
\label{thm:reformulation3}
Let $\mcal Z \subseteq \R^{m}$ be a nonempty, convex and closed set, and consider the 2-Wasserstein ball, where the ground cost $c$ is given by the Euclidean norm on $\R^d$. Suppose that for every $\bm \theta$, the function $\ell(\bm z, \bm \theta)$ is proper, concave, and upper-semicontinuous in $\bm z$. 
Then, the optimal value of the following finite convex program provides an arbitrarily tight conservative approximation for~\eqref{eq:RRRM}:
\begin{equation}
\label{opt:thm3}
\begin{array}{rcl}
&\inf&\displaystyle  \sum_{s\in [S]} \hat p_s(\bm \theta_t) \left([-\ell]^*(\bm r_{s} - \bm \zeta_s, \bm \theta) + \sigma_{\mcal Z}(\bm \zeta_s) - \bm r_s^\top \bm {\hat z}_s + \frac{1}{4\lambda}  \left\| \bm r_s \right\|_2^2 \right) + \rho^2 \lambda + \tau \lambda^2 \\
     &\st & \displaystyle \bm \theta \in \bm \Theta, \;\; \lambda \in \R_+,\;\; \bm r_s \in \R^m \;\forall s\in[S], \;\; \bm \zeta_{s} \in \R^m \;\forall s\in [S] 
\end{array}
\end{equation}
where $\tau \in \R_{++}$ is a constant, $[-\ell]^*(\bm \xi, \bm \theta) = \sup_{\bm z \in \R^m} \bm \xi^\top \bm z - [-\ell(\bm z, \bm \theta)]$ denotes the conjugate of $-\ell$ with respect to $\bm z$, and  $\sigma_{\mcal Z}(\bm \xi) = \sup_{\bm z \in \mcal Z} \bm \xi^\top \bm z$ is the support function of $\mcal Z \in \R^m$. 
\end{model}

Note that in problem~\eqref{opt:thm3}, the loss function $\ell(\bm z, \bm \theta)$ and the support set $\mcal Z$ enter the formulation through the convex conjugate of the negative loss $[-\ell]^*(\cdot, \bm \theta)$, and the support function $\sigma_{\mcal Z}(\cdot)$, respectively. Both transformations yield convex functions under the assumptions stated in~\Cref{thm:reformulation3}. Furthermore, the term $(1/4\lambda) \left\| \bm r_s \right\|_2^2$ is jointly convex in ($\lambda, \bm r_s$)~\citep[section 3.2.6]{boyd2004convex}. Consequently, all objective and constraint functions in problem~\eqref{opt:thm3} are convex, and the overall optimization problem is manifestly convex.

The reformulation in~\Cref{thm:reformulation3} introduces the dual variable $\lambda$, which appears alongside the decision variable $\bm \theta$. This motivates the use of an augmented vector $\overline{\bm \theta} = (\bm \theta, \lambda) \in \bm \Theta \times \R_+ = \bm{\overline \Theta}$. To ensure strong convexity in the joint variable $\overline{\bm \theta}$ which is important for the convergence guarantees of the R$^3$M algorithm, we introduce a regularization term $\tau \lambda^2$, where $\tau >0$. This leads to the regularized robust decoupled performative risk, denoted by $\J_{\bm \theta_t}^\tau (\overline{\bm \theta})$. And problem~\eqref{opt:thm3} is equivalent to $\inf_{\overline{\bm \theta} \in \overline{\bm \Theta}} \J_{\bm \theta_t}^\tau( \overline{\bm \theta})$. 
Notice that for any $\bm\theta\in\bm\Theta$, $\inf_{\lambda\geq 0}\J_{\bm \theta_t}^\tau (\bm \theta,\lambda)$ converges to $\J_{\bm \theta_t} (\bm \theta)$ as $\tau\downarrow 0$.  If $\ell(\bm z, \bm \theta)$ is lower semicontinuous in $\bm\theta$, then so is $\J_{\bm \theta_t}$. Hence,  $\inf_{\lambda\geq 0}\J_{\bm \theta_t}^\tau (\cdot,\lambda)$, epi-converges to $\J_{\bm \theta_t} (\cdot)$ \cite[Theorem 7.17]{rockafellar2009variational}, and the minimizer $\hat{\bm\theta}$ of \eqref{opt:thm3} converges to a minimizer of \eqref{eq:RRRM} whenever  $\bm\Theta$ is compact  \cite[Theorem 7.33]{rockafellar2009variational}.

\subsection{Convergence Analysis}

We now establish convergence guarantees for the RRRM algorithm when applied to the reformulations introduced in Models~\ref{thm:reformulation1},~\ref{thm:reformulation2}, and~\ref{thm:reformulation3}. As noted earlier, we employ the smoothed objective $\J_{\bm \theta_t}^\mu(\bm \theta)$ for~\eqref{eq:RRRM} in~\Cref{thm:reformulation2} and the regularized objective $\J_{\bm \theta_t}^\tau (\overline{\bm \theta})$ in~\Cref{thm:reformulation3}. Each model requires specific assumptions to ensure contraction of the risk map and hence convergence:

\begin{enumerate}[left=0pt,label=(\textbf{A\arabic*})]
    \item  \Cref{thm:reformulation1}: The loss function satisfies the $\gamma$-strong convexity~\eqref{ass:a3} and $\beta$-smoothness~\eqref{ass:a2}. \label{a1}
    \item  \label{a2} \Cref{thm:reformulation2}: For all $j \in [J]$, $\bm {\overline A}_j^\top \bm {\overline A}_j \succ \bm 0$ and let $\alpha = 2 \min_{j\in [J]} \lambda_{\min}(\bm{\overline A}_j^\top \bm{\overline A}_j)$. 
    In addition, the feasible set $\bm \Theta$ and the support of $\hat{\P}(\bm \theta_t)$ are bounded.
    \item \Cref{thm:reformulation3}: The loss function satisfies the $\gamma$-strong convexity~\eqref{ass:a3} and $\beta$-jointly smoothness~\eqref{ass:a2}. Additionally the support $\mcal Z$ has a finite diameter $D< \infty$. \label{a3}
\end{enumerate}
Under these respective conditions, the algorithm converges linearly to a stable point.

\begin{theorem}\label{thm:convergence_a}
Suppose the loss functions in Models~\ref{thm:reformulation1},~\ref{thm:reformulation2}, and~\ref{thm:reformulation3} satisfy Assumptions~\ref{a1},~\ref{a2}, and~\ref{a3} respectively, and that the distribution map $\hat{\P}(\cdot)$ satisfies the $\epsilon$-sensitivity condition~\eqref{ass:eps}. Then:
\begin{itemize}
\item[(a)] $\|\bm \theta_{t+1} - \bm \theta_{t+1}'\|_2 \leq  \epsilon \kappa \|\bm \theta_t- \bm \theta_t'\|_2$ for all  $\bm \theta_t, \bm \theta_t'\in \bm \Theta$.
\item[(b)] if $\epsilon\kappa<1$,  the iterate $\bm \theta_t$ of~\eqref{eq:RRRM} converges linearly to a unique performatively stable point $\thetaRPS$:
\begin{equation*}
\|\bm \theta_t - \thetaRPS\|_2\leq \Delta \text{ for } t\geq \left(1 - \epsilon \kappa \right)^{-1}\log\left(\|\bm\theta_0 - \thetaRPS\|_2/\Delta\right).
\end{equation*}
where $\Delta > 0$ is a predefined tolerance level. The fixed point $\thetaRPS$ depends on the model and is denoted $\thetaRPSmu$ under Model~\ref{thm:reformulation2}, and $\thetaRPStau$ under Model~\ref{thm:reformulation3}.
\end{itemize}
Here, $\kappa = \beta / (\gamma + 2\rho L)$ for Model~\ref{thm:reformulation1}, 
$\kappa =J d k_3 \left( \frac{k_1 k_2}{\mu} + 1 \right)/\rho \alpha$ for Model~\ref{thm:reformulation2}, and $\kappa = (\beta + 4D) / \min\{2\tau, \gamma\}$ for Model~\ref{thm:reformulation3}, where $k_1, k_2, k_3 < \infty$ are model-dependent constants.
\vspace{-0.5em}

\end{theorem}

This convergence result highlights several compelling advantages of our DRPO framework, particularly when contrasted with traditional approaches to performative optimization that necessitate strong convexity and smoothness for convergence. First, our DRPO framework establishes convergence guarantees for loss functions that are convex but not necessarily strongly convex. Additionally, our exponential smoothing tricks applied in Model 2 enable the RRRM algorithm to converge for non-smooth loss functions. Finally, our DRPO framework accelerates the convergence rate compared with its non-robust counterpart. These advancements significantly broadens the scope of problems amenable to performative optimization and enhance the computational efficiency in practical problems.

\subsection{Suboptimality Guarantees}

Our next result demonstrates that the robust performatively stable solution $\thetaRPS$ is close to the robust performatively optimal solution $\thetaRPO$ whenever the $\epsilon$-sensitivity of the distribution map is small, the loss function has a large strong convexity parameter $\gamma$, or the distributionally robust model induces a regularization with large strong convexity
parameter $\rho$. 
\begin{theorem}
\label{thm:suboptimality_general}
Suppose all conditions in Theorem \ref{thm:convergence_a} hold. Additionally, assume that the loss function is $L_z$-Lipschitz in $\bm z$ for both models, and $L_\theta$-Lipschitz in $\bm \theta$ for Model~\ref{thm:reformulation3}. Then, the suboptimality gap between the robust performatively stable solution and the optimal solution under the true distribution is bounded as follows:
\begin{enumerate}[label=(\alph*), itemsep=4pt, topsep=2pt]
    \item \textbf{Model~\ref{thm:reformulation1}:} $\J_\thetaRPS(\thetaRPS) - \J_\thetaRPO(\thetaRPO) \leq \frac{2\epsilon^2 L_z^2}{\gamma + 2\rho L}.$
    \item \textbf{Model~\ref{thm:reformulation2}:} $\J_{\thetaRPSmu}^\mu(\thetaRPSmu) - \J_\thetaRPO(\thetaRPO) \leq \frac{2(\epsilon L_z + 2\mu' \log J)^2}{\rho \alpha}.$ 
    \item  \textbf{Model~\ref{thm:reformulation3}:}
        $\J_{\thetaRPStau}^\tau (\thetaRPSbartau)-\J_\thetaRPO(\thetaRPObar) \leq  \tau \overline{\lambda}^2 +\frac{(2 \tau \overline{\lambda} + \rho^2 + D^2+L_\theta + \epsilon L_z) 2\epsilon L_z}{\min(\gamma, 2\tau)}.$
\end{enumerate}
Here, $\mu' \in[0,1]$ is a constant that satisfies $\mu \leq \mu' \|\thetaRPSmu-\thetaRPO\|_2$, 
and $\overline{\lambda}$ is the upper bound on the optimal $\lambda$ as given in Lemma~\ref{lemma:bounded_lambda}. \vspace{-0.5em}
\end{theorem}

The suboptimality bound for~\Cref{thm:reformulation2} also highlights the advantages of our DRPO framework alongside the smoothing tricks. For non-smooth, convex but not strongly-convex functions, the suboptimality bound under the traditional performative prediction framework can be arbitrarily large (as $\rho \rightarrow 0, \mu \rightarrow 0$) as suggested by our result.

\section{Experiments}
In this section, we present numerical experiments to evaluate the performance of our proposed models across three applications: strategic classification, revenue management, and portfolio optimization. All experiments were conducted on a laptop equipped with
a 6-core, 2.3 GHz Intel Core i7 CPU and 16 GB of RAM. The optimization problems were implemented in Python 3.11.

\subsection{Strategic Classification}
We consider a simulated strategic classification problem from~\citep{perdomo2020performative} using a class-balanced subset of a Kaggle credit scoring
dataset~\cite{creditdata}. The dataset contains features $\tilde{\bm x}\in\R^P$ about borrowers, such as their ages and the number of open loans. The outcomes $\tilde y\in\{-1,1\}$ are equal to 1 if the individual defaulted on a loan and $-1$ otherwise. The institution's objective is to predict whether an individual will default on their debt.

Under the strategic classification setting, individuals respond to the institution’s classifier by altering their features to increase their likelihood of receiving a
favorable classification. The institution employs logistic regression for classification, with $\bm {\tilde z}= \bm{ \tilde x}\tilde y$, and the loss function is given by $\log  (1+\exp(-\bm x^\top \bm{\theta} y))$. This setting aligns with Theorem~\ref{thm:reformulation1}, where $\mathcal L$ represents the logloss function with Lipschitz constant $L=1$, and the quadratic function $ {\bm \theta}^\top \bm Y {\bm \theta}+ 2 \bm z^\top\bm \theta + z^0$ simplifies to the affine function $\bm z^\top\bm \theta$. See~\Cref{sec:appendix} for additional details.

We compare the performance of our robust models (with type-1 and type-2 Wasserstein ambiguity sets) against the alternating minimization algorithm under KL divergence ambiguity set (AMKL) from~\citep{xue2024distributionally}. Additionally, we include a non-robust model as a baseline for comparison. The training set is fixed at 200 samples, while approximately 3,600 data points are used for out-of-sample testing. This setup reflects realistic scenarios where data collection is costly or limited. In credit scoring, for example, obtaining labeled data often requires expensive evaluations, expert assessments, or lengthy observation periods. All models are trained for 40 iterations, with the robust parameter set to 0.1 for all robust variants. 

\begin{figure}[htb!]
  \centering
\includegraphics[width=0.8\linewidth]{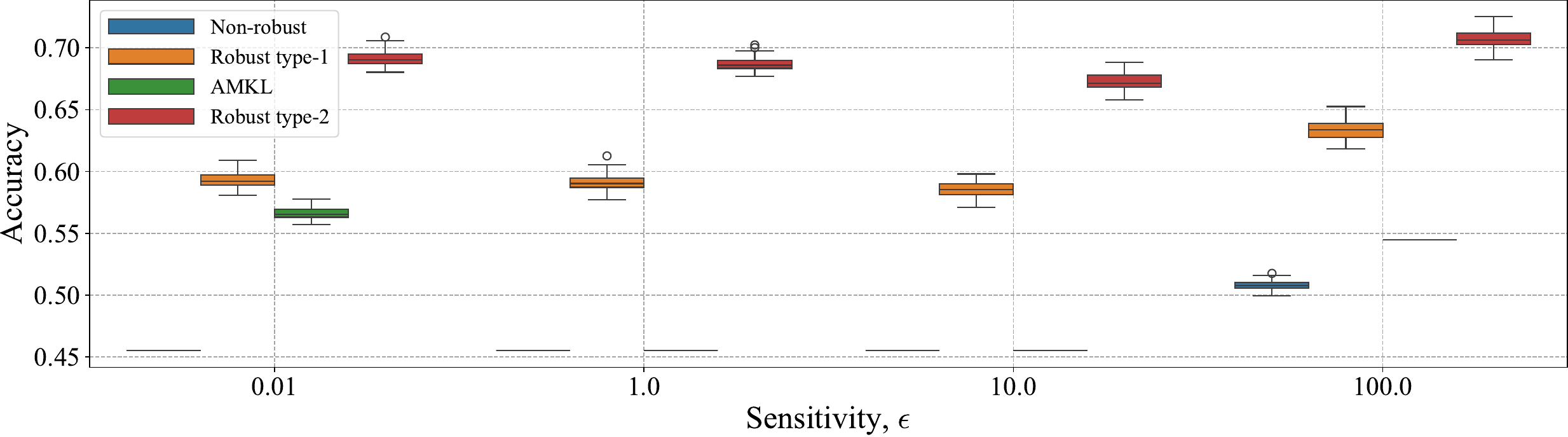}
\caption{Out of sample performance of different approaches\vspace{-1em}}
\label{fig:oos}
\end{figure}
Figure~\ref{fig:oos} displays the box plots of 50 independent trials. As shown, our robust models  outperform the AMKL algorithm, which performs quite poorly. This may be due to its susceptibility to local optima, as well as the limitations of the KL divergence ambiguity set, which may not sufficiently guard against distributional shifts that are not well represented in the reference distribution. All robust models significantly outperform the non-robust baseline, highlighting the effectiveness of distributional robustness. Finally, we find that the robust model with a type-2 Wasserstein ambiguity set outperforms the one with a type-1 Wasserstein ambiguity set. This arises primarily due to the geometry of the ambiguity set. As discussed in~\cite{byeon2025comparative}, selecting the optimal radius $\rho$ is challenging, and the 2-Wasserstein ball often provides better performance because it offers a wider range of radius values for which the robust solution outperforms its non-robust counterpart.

\subsection{Revenue Management}
\label{sec:revenue_management}

\begin{wrapfigure}{o}{0.6\textwidth}
\vspace{-1.5em}
  \begin{center}
    \includegraphics[width=0.6\textwidth]{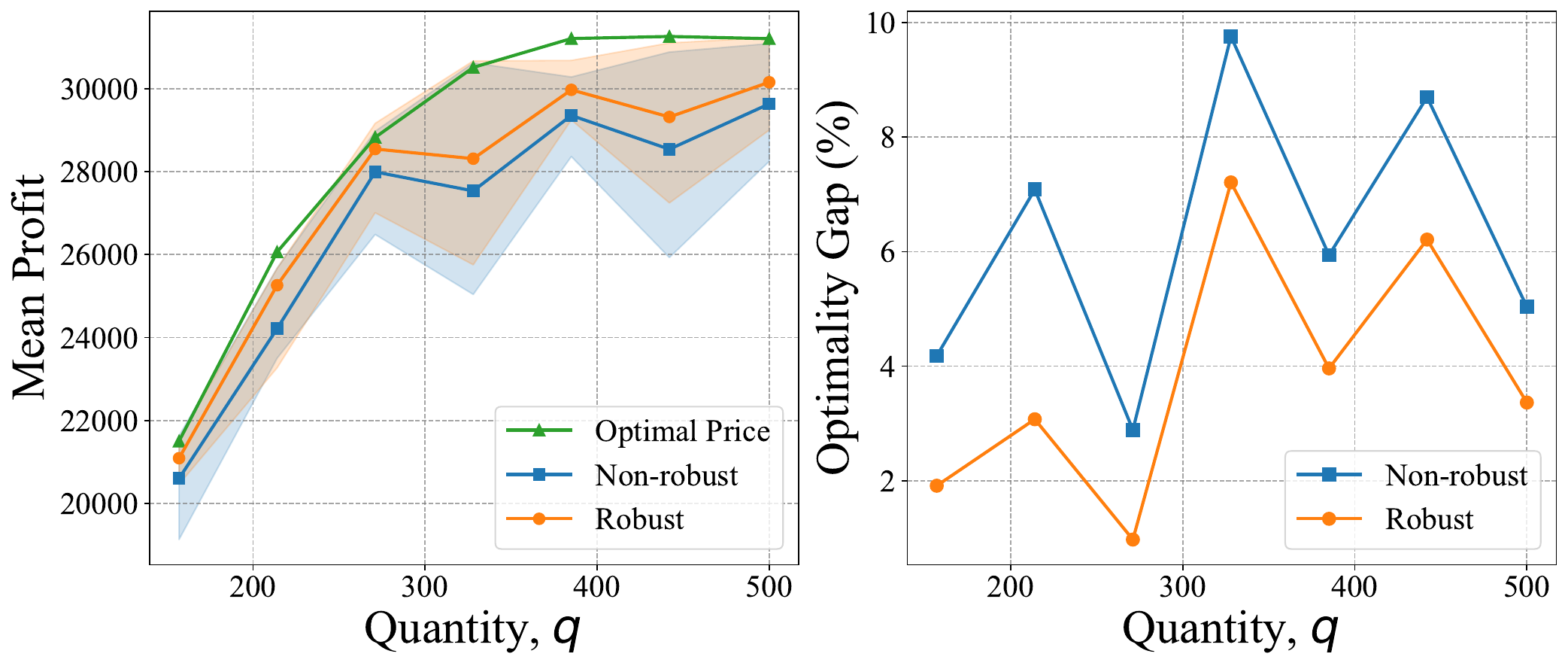}
  \end{center}
  \caption{Out-of-sample performance of pricing schemes\vspace{-2em}}
    \label{fig:revenue}
\end{wrapfigure}
In this experiment, we address the revenue management problem where the decision-maker determines the unit price \(\theta \geq 0\) for a fixed quantity of perishable products \(q \in \mathbb{Z}_{++}\), such as hotel rooms or airplane seats, under uncertain demand \(\tilde{z} \sim \mathbb{P}(\theta)\), with higher prices inducing lower demand~\cite{alstrup1989booking,deyong2020price,popescu2006estimating}.

Following \cite{petruzzi1999pricing}, we model the price-dependent demands using an additive function 
$
\tilde{z}(\theta) = -a\theta + b + \tilde{\epsilon}.
$
Here, unknown parameters \(a > 0\) and \(b > 0\) capture a deterministic linear demand curve, and \(\tilde{\epsilon}\) is a random variable with a bounded support that is characterized by an unknown density function. Instead of the true model, we only have access to a surrogate model 
$-\bar{a}\theta + \bar{b}$, which deviates from the true model as the parameters \(\bar{a}\) and \(\bar{b}\) do not accurately represent their counterparts. Under this additive demand setting, the associated loss function becomes a piecewise quadratic function in \(\theta\), allowing us to apply \Cref{thm:reformulation2} to formulate a robust version of the revenue management problem, which can be efficiently solved within seconds using an off-the-shelf commercial solver as it is a convex problem. 

\Cref{fig:revenue} compares the out-of-sample performance of our robust scheme (orange) with benchmarks. Due to the non-smoothness of the loss function, the AMKL approach is not applicable in this setting. As shown in \cite{petruzzi1999pricing}, when the true distribution map is available, the optimal price (green) can be derived in closed form, achieving the highest expected profit for each fixed quantity \(q\) in the left subplot. Similarly, the non-robust price (blue) can be obtained by treating the surrogate model as the true model. The confidence region in the left subplot, representing the 10th-90th percentile range of 100 independent tests, and the smaller optimality gap relative to the mean profit induced by the optimal price in the right subplot show that our robust price consistently outperforms the benchmark.

\subsection{Demand Response Portfolio Optimization}

We evaluate our robust scheme in a power system application, focusing on demand response (DR) portfolio optimization~\cite{teng2021modeling,zhang2017evaluation}. In electricity markets, consumers capable of lowering electricity consumption during certain periods are called DR resources. In this experiment, we consider a DR aggregator (decision-maker) managing \(n\) DR resources over a planning horizon of \(T\) periods. The goal is to maximize the expected profit by determining commitment level \(\bm\theta_t \in \mathbb{R}_+^{n}\) to meet a required deterministic demand reduction \(D_t\) at each time \(t\in[T]\). 

The challenge lies in the uncertainty of DR resource's performance, where the scheduled commitment level \(\bm\theta_t\) may significantly differ from the actual reduction level \(\tilde{\bm\theta}_t\) due to random noise \(\tilde{\bm z}_t\). This noise is decision-dependent, as larger commitments lead to higher variability. In this experiment, we model the actual reduction of each resource $i$ as $
\tilde{\theta}_{t,i} = \theta_{t,i} \tilde{z}_{t,i}$ for all $i\in[n]$.
Here, the multiplicative noise follows a beta distribution whose parameters $\alpha$ and $\beta$ depend linearly on the commitment level: $\tilde{z}_{t,i} \sim 2 \cdot \text{Beta}(\alpha = \beta = a_i \theta_{t,i} + b_i)$ with $a_i<0$.
This beta distribution has a support of \([0,2]\) and a mean of 1, regardless of the value of \(a_i \theta_{t,i} + b_i\). Therefore, the decision \(\theta_{t,i}\) only influences the distribution shape, with higher commitment levels leading to heavier tails, hence, higher variability of the actual reduction level \(\tilde{\theta}_{t,i}\).

\begin{wrapfigure}{o}{0.4\textwidth}
\vspace{-1.5em}
  \begin{center}
    \includegraphics[width=0.4\textwidth]{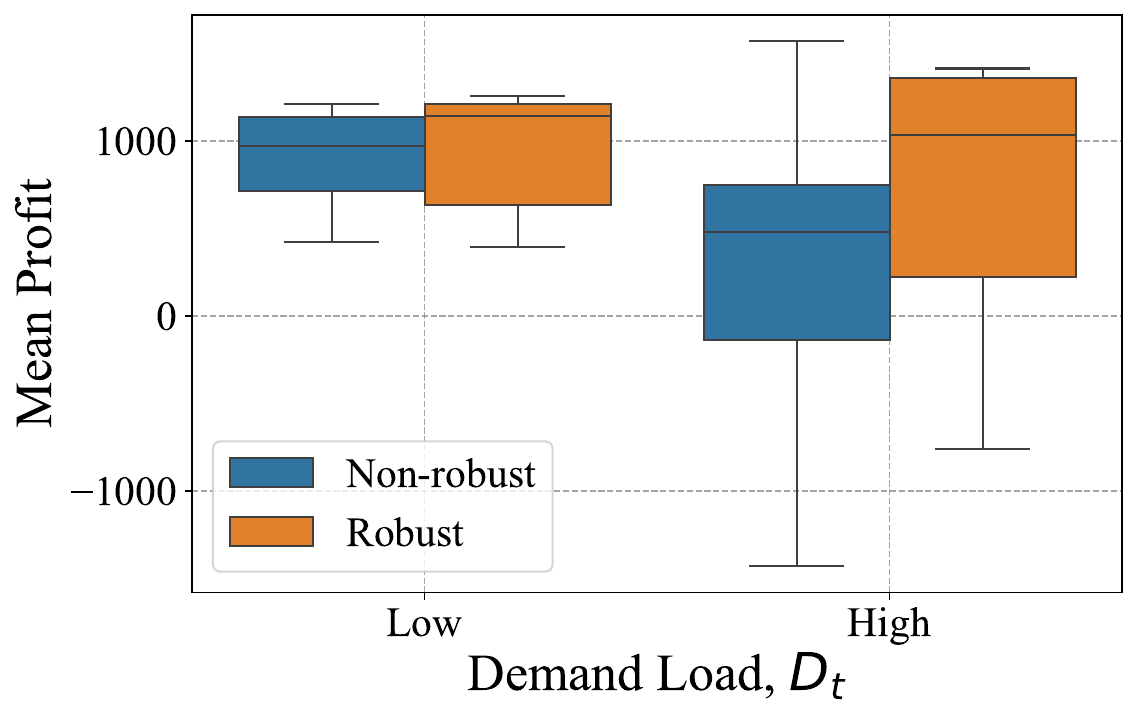}
  \end{center}
  \caption{Out-of-sample performance of DR scheduling\vspace{-2.1em}}
    \label{fig:DR_management}
\end{wrapfigure}
We consider three DR resources with distinct characteristics: Resource 1 has high revenue but large variability, Resource 2 has low revenue but high predictability, and Resource 3 offers a balanced trade-off between the two. We follow the experiment setup in \cite{chen2023robust}, including the loss function and the values of unit revenue, over-commitment cost, and under-commitment penalty. As their loss function is piecewise-linear, the resulting optimization problem can be formulated using \Cref{thm:reformulation2} and efficiently solved. For further details, we refer the reader to the original paper. 
We conduct two out-of-sample tests: low and high demand loads over the planning horizon, corresponding to small \(D_t\) and large \(D_t\), respectively. \Cref{fig:DR_management} compares the mean profits of our robust scheme (orange) and the non-robust scheme (blue), showing that the robust approach outperforms the non-robust one. Notably, under high demand, the non-robust scheme often performs significantly poorly, resulting in losses in some cases.


\section{Concluding Remarks}

We have presented the first Wasserstein distributionally robust optimization framework for performative optimization. In contrast to existing approaches, our framework accommodates a broader class of problems in decision-making under uncertainty, thereby extending the original scope of performative prediction. We proposed an efficient algorithm and established its convergence and suboptimality guarantees. To our knowledge, these theoretical results have not been previously established in the literature on robust performative prediction. Our experimental results demonstrate the superiority of our approach over existing methods on the standard strategic classification benchmark, as well as in two decision-making applications: revenue management and demand response portfolio optimization.

Notably, as the ambiguity set radius $\rho$ approaches zero, the robust objective coincides with the non-robust counterpart, which more directly targets the performative risk. This observation suggests a possible direction: designing algorithms that gradually shrink the ambiguity set over time, potentially trading robustness for improved approximation of the true performative risk as more information becomes available. Another promising direction is contextual performative optimization, where incorporating side information could further improve decision quality by enabling more accurate modeling of uncertainty.

\textbf{Broader impacts.} Our framework extends the scope of performative prediction beyond its original focus, enabling its application to a wider range of decision-making problems. In high-stakes settings, adopting a distributionally robust optimization perspective allows our approach to prioritize safe and reliable deployment in the presence of uncertainty and potential adversarial conditions. 

\begin{ack}
Grani A. Hanasusanto is supported in part by the National Science Foundation (NSF) under Grants CCF-2343869 and ECCS-2404413. Roy Dong is supported in part by NSF under Grant CCF-2236484. Yijie Wang is supported in part by the Fundamental Research Funds for the Central Universities. We thank Hyuk Park for assistance with the numerical experiments and the anonymous reviewers for their constructive feedback that helped improve this work.
\end{ack}


\bibliography{bibfile}
\bibliographystyle{abbrv}

\newpage
\appendix

\vspace{0.5em}
{\centering \Large \textbf{Technical Appendices and Supplementary Material}\\}

\section{Preliminary Definitions}
\begin{definition}[Generalized strong convexity]
We say that a loss function $\ell(\bm z,\bm  \theta)$ is $\gamma$-strongly convex in $\bm \theta$ if
\begin{equation}
\ell(\bm z, \bm \theta)\geq \ell(\bm z,\bm \theta') + \nabla_{\bm \theta} \ell(\bm z, \bm \theta')^\top (\bm \theta- \bm \theta') + \frac{\gamma}{2}\left\|\bm \theta-\bm \theta'\right\|_2^2,
\tag{B1}
\label{ass:a3}
\end{equation}
for all $\bm \theta, \bm \theta'\in \bm \Theta$ and $\bm z\in \mathcal{Z}$. If $\gamma=0$, this condition reduces to the standard definition of convexity.
We will also use the following equivalent definition of strong convexity. A loss function  $\ell(\bm z, \bm \theta)$ is $\gamma$-strongly convex in $\bm \theta$ if the function 
\begin{equation}
f(\bm z, \bm \theta)=\ell(\bm z, \bm \theta)-\frac{\gamma}{2}\|\bm \theta\|^2_2
\tag{B1'}
\label{ass:a3'}
\end{equation}
is convex for all $\bm z\in\mathcal Z$. 
\end{definition}

\begin{definition}[Smoothness]
We say that a loss function $\ell(\bm z, \bm \theta)$ is $\beta$-smooth if the gradient $\nabla_{\bm\theta} \ell(\bm z, \bm \theta)$ is $\beta$-Lipschitz in $\bm z$, that is
\begin{equation*}
\begin{array}{l}
\left\|\nabla_{\bm\theta} \ell(\bm z, \bm \theta) - \nabla_{\bm\theta} \ell(\bm z', \bm \theta)\right\|_2 \leq \beta \left\|\bm z - \bm z'\right\|_2,
\tag{B2}
\end{array}
\label{ass:a2}
\end{equation*}
for all $\bm z, \bm z' \in \mathcal{Z}$.
\end{definition}

\begin{definition}[$\epsilon$-sensitivity]
\label{def:eps}
We say that a distribution map $\P(\cdot)$ is \emph{$\epsilon$-sensitive} if for all $\theta, \theta' \in \Theta$
\begin{equation}
\label{ass:eps}
\Wass_1\big(\P( \bm \theta), \P( \bm \theta')\big) \leq \epsilon\|\bm \theta -\bm \theta'\|_2,
\tag{B3}
\end{equation}
where $\Wass_1$ denotes the 1-Wasserstein metric.
\end{definition}

\section{Background on Exponential Smoothing}

Let $\mathcal{Z} = \{z_1, \dots, z_n\}$ denote a finite support set. Given a loss function $\ell: \Theta \times \mathcal{Z} \to \mathbb{R}$ and a distribution $\hat{\bm{p}} \in \Delta_n$, where $\Delta_n$ denotes the probability simplex. We define the exponentially smoothed objective as:
\[
f_\mu(\theta) := \mu \cdot \log \left( \sum_{i=1}^n \hat{p}_i \cdot \exp\left( \frac{\ell(\theta, z_i)}{\mu} \right) \right),
\]
where $\mu > 0$ is a smoothing parameter. 

This function, also known as the log-sum-exp function, is a widely used smooth approximation to the pointwise maximum function and has well-established properties in convex analysis  [Bertsekas, 2015; Section 2.2]. This function provides a smooth approximation to $\max_{i} \ell(\theta, z_i)$ whenever $\hat{p}_i > 0$ for all $i$.

\paragraph{Properties.} The function $f_\mu(\theta)$ satisfies the following:
\begin{itemize}
    \item Approximation Bounds: The approximation error can be precisely quantified:
\[
\max_{i} \ell(\theta, z_i) \leq f_\mu(\theta) \leq \max_{i} \ell(\theta, z_i) + \mu \cdot \log \left( \frac{1}{\min_i \hat{p}_i } \right),
\]
provided $\hat{p}_i > 0$ for all $i$. Thus, as $\mu \to 0$, the smoothed objective $f_\mu(\theta)$ approaches the exact maximum, with the error vanishing linearly in $\mu$ up to a logarithmic multiplicative factor.
\item Convexity and Differentiability: If each function $\ell(\theta, z_i)$ is convex in $\theta$, then $f_\mu(\theta)$ is also convex, as it is a composition of convex functions closed under nonnegative weighted log-sum-exp operations. Moreover, $f_\mu(\theta)$ is continuously differentiable for all $\mu > 0$, with gradient:
\[
\nabla_\theta f_\mu(\theta) = \sum_{i=1}^n \pi_\mu(z_i; \theta) \cdot \nabla_\theta \ell(\theta, z_i),
\]
where the weight vector $\pi_\mu(\cdot; \theta) \in \Delta_n$ defines a softmax distribution:
\[
\pi_\mu(z_i; \theta) := \frac{\hat{p}_i \cdot \exp\left( \ell(\theta, z_i)/\mu \right)}{ \sum_{j=1}^n \hat{p}_j \cdot \exp\left( \ell(\theta, z_j)/\mu \right)}.
\]
\end{itemize}

This smoothing mechanism not only ensures differentiability but also facilitates efficient computation of gradients for robust optimization objectives involving maxima.

\section{Auxiliary Lemmas}

\begin{lemma}[First-order optimality condition; Section 4.2.3 in \cite{boyd2004convex}]
\label{lemma:first_order_opt_condition}
	Let $f$ be a convex function and let $\bm \Omega$ be a closed convex set on which $f$ is differentiable, then 
	\[
	\bm x^\star \in \argmin_{\bm x \in \bm \Omega} f(\bm x)
	\]
	if and only if 
	\[
	\nabla f(\bm x^\star)^T(\bm y- \bm x^\star) \geq 0 \quad \forall \bm y \in \bm \Omega.
	\]
\end{lemma}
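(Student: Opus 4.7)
The plan is to prove the two implications separately, using convexity of $\Omega$ for the necessity direction and convexity of $f$ for the sufficiency direction. Both arguments are standard, and no single step is particularly delicate; the only bookkeeping care is in justifying the passage to the limit in the directional derivative.

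For the necessity direction ($\Rightarrow$), I would fix an arbitrary $y \in \Omega$ and use the convexity of $\Omega$ to form the segment $x_t \defeq (1-t)x_* + t\, y \in \Omega$ for $t \in [0,1]$. Since $x_*$ is a minimizer over $\Omega$, we have $f(x_t) \geq f(x_*)$ for every $t \in (0,1]$, so
\[
\frac{f(x_*+ t(y-x_*)) - f(x_*)}{t} \geq 0.
\]
Taking $t \downarrow 0$ and invoking differentiability of $f$ at $x_*$ yields $\nabla f(x_*)^T (y - x_*) \geq 0$. Since $y \in \Omega$ was arbitrary, this gives the variational inequality.

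For the sufficiency direction ($\Leftarrow$), I would appeal to the first-order characterization of convexity: since $f$ is convex and differentiable on $\Omega$,
\[
f(y) \geq f(x_*) + \nabla f(x_*)^T (y - x_*), \qquad \forall y \in \Omega.
\]
Combining this with the assumed inequality $\nabla f(x_*)^T (y - x_*) \geq 0$ immediately gives $f(y) \geq f(x_*)$ for all $y \in \Omega$, so $x_* \in \argmin_{x \in \Omega} f(x)$.

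The main potential obstacle is a minor technical point in the forward direction, namely ensuring that $f$ is actually differentiable at $x_*$ along the feasible direction $y - x_*$ (which is automatic from the assumption that $f$ is differentiable on $\Omega$ and $x_*, y \in \Omega$, so the segment lies in the domain of differentiability). Beyond this, the proof is essentially mechanical and does not require any further machinery.
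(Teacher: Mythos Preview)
Your proof is correct and follows the standard textbook argument (see, e.g., Boyd and Vandenberghe, \emph{Convex Optimization}, \S4.2.3, or Bubeck, \emph{Convex Optimization: Algorithms and Complexity}, Proposition~1.3). The paper does not supply its own proof of this lemma---it is stated as a preparatory technical tool and used later in the proof of Theorem~\ref{theorem:exact_min_strongly_convex}---so there is nothing further to compare against.
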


\begin{lemma}[\cite{kantorovich1958space}]
\label{lemma:duality} A distribution map $\P(\cdot)$ is $\epsilon$-sensitive if and only if for all $\bm \theta,\bm \theta'\in \bm \Theta$, we have
\begin{equation*}
\sup_{g\in\mathcal L} \Big|\E_{\P(\bm \theta)}[g(\bm {\tilde z})] - \E_{\P(\bm \theta')}[g(\bm {\tilde z})]\Big| \leq \epsilon L\|\bm \theta -\bm \theta'\|_2,
\end{equation*}
where 
\begin{align*}
    \mathcal L = \{g: \R^p \rightarrow \R \;\mid\; |g(\bm z)-g(\bm z')|\leq L \|\bm z-\bm z'\|_2 \quad 
    \forall\bm z, \bm z'\in\mathcal Z \}
\end{align*}
is the space of all $L$-Lipschitz continuous functions.
\end{lemma}

\begin{lemma}
\label{lem:strong_convexity_wce}
    If $\ell(\bm z,\bm \theta)$ is $\gamma$-strongly convex, then the worst case expectation 
    \begin{equation*}
    \J_{\bm \eta}( \bm \theta) =\sup_{\Q \in \BB ({\hat \P}(\bm \eta))} \E_{ \Q} [\ell(\bm {\tilde z}, \bm \theta)]
    \end{equation*}
    is $\gamma$-strongly convex in $\bm \theta$.
\end{lemma}
\begin{proof}
By the equivalent definition of $\gamma$-strong convexity in \eqref{ass:a3'}, we have that $\ell(\bm z,\bm \theta)-\frac{\gamma}{2}\|\bm \theta\|^2$ is  convex in $\bm \theta$. Hence, the worst-case expectation
    \begin{equation*}
    \sup_{\Q \in \BB (\hat{\P}(\bm \eta))} \E_{ \Q} \left[ \ell(\bm{\tilde z},\bm \theta)-\frac{\gamma}{2}\|\bm \theta\|^2_2 \right]
    \end{equation*}
is convex in $\bm \theta$ since the expectation and the pointwise supremum operations preserve convexity. Thus, we have 
    \begin{equation*}
    \J_{\bm \eta}( \bm \theta) =\sup_{\Q \in \BB (\hat{\P}(\bm \eta))} \E_{ \Q} \left[\ell(\bm{\tilde z},\bm \theta)-\frac{\gamma}{2}\|\bm \theta\|_2^2\right]+\frac{\gamma}{2}\|\bm \theta\|_2^2
    \end{equation*}
is $\gamma$-strongly convex in $\bm \theta$ by the definition \eqref{ass:a3'}. 
\end{proof}

\begin{lemma}
\label{lem:smoothing_bound}
Let $\bm x \in \R^J$ and define the smooth maximum function
\[f_\mu(\bm x) = \mu \log \left( \sum_{j\in [J]} e^{ x_j / \mu} \right)\] 
for any $\mu > 0$. Then the following bounds hold:
    \begin{align*}
        \max_{j \in [J]} x_j \leq f_\mu(\bm x) \leq \max_{j \in [J]} x_j + \mu \log J.
    \end{align*}
\end{lemma}
\begin{proof}
Let $M := \max_{j \in [J]} x_j$. Then for each $j \in [J]$, we have $x_j \leq M$, and hence:
\begin{align*}
    \sum_{j\in [J]} e^{ x_j / \mu} \leq \sum_{j\in [J]} e^{M /\mu} = J e^{M /\mu}.
\end{align*}
Taking logarithms and multiplying by $\mu$, we obtain the upper bound:
\begin{equation}
\label{eq:lem_smooth_1}
    \begin{aligned}
        f_\mu(\bm x) = \mu \log \left( \sum_{j\in [J]} e^{ x_j / \mu} \right) \leq \mu \log \left( J e^{M /\mu} \right) = \mu \log J + M.
    \end{aligned}
\end{equation}
For the lower bound, observe that
\begin{align*}
    \sum_{j\in [J]} e^{ x_j / \mu} \geq e^{M/\mu}
\end{align*}
since at least one term in the sum equals $e^{M/\mu}$. Thus:
\begin{align}
\label{eq:lem_smooth_2}
    f_\mu(\bm x) = \mu \log \left( \sum_{j\in [J]} e^{ x_j / \mu} \right) \geq  \mu \log \left( e^{ M / \mu} \right) = M
\end{align}
Combining both bounds in~\eqref{eq:lem_smooth_1} and \eqref{eq:lem_smooth_2}, we have the desired result:
\begin{align*}
    \max_{j \in [J]} x_j \leq f_\mu(\bm x) \leq \max_{j \in [J]} x_j + \mu \log J.
\end{align*}
\end{proof}

\section{Deferred Proofs Related to Reformulations}
\label{app:reformulation}
\subsection{\texorpdfstring{Proof of Reformulation for~\Cref{thm:reformulation1}}{Proof of Theorem 1}}

\begin{proof}
We begin by rewriting the random parameters $\bm {\tilde Z}$ as
\begin{equation*}
    \bm {\tilde Z} =\begin{bmatrix}
        \bm {\tilde Y}& \bm {\tilde z}\\
        \bm{\tilde z^\top} & {\tilde z^0}
    \end{bmatrix} \in\S^{N+1}.
\end{equation*}
Next, we introduce the matrix variable
\begin{equation}
\label{eq:Gamma}
    \bm \Gamma=\begin{bmatrix}\bm \theta \bm \theta^\top& \bm \theta\\ \bm \theta^\top & 1\end{bmatrix},
\end{equation}
which allows us to rewrite the loss function as $\ell(\bm Z, \bm \theta)=\mathcal L(\langle \bm \Gamma , \bm Z\rangle)$. According to~\cite[Remark 1]{blanchet2019quantifying}, the worst-case expected loss over a 1-Wasserstein ambiguity set $\BB (\hat{\P}(\bm \theta_t))$, with cost induced by the Schatten-$\infty$ norm, is given by: 
\begin{equation*}
\begin{array}{rl}
&\displaystyle     \sup_{\Q \in \BB (\hat{\P}(\bm \theta_t))} \displaystyle\E_{ \Q} [\ell(\bm {\tilde Z}, \bm \theta)]
= \displaystyle\inf_{\lambda\in \R_+} \rho\lambda + \displaystyle \E_{ \hat{\P}(\bm \theta_t) } \left[\sup_{\bm Z\in \S^{N+1}} \mathcal L(\langle \bm \Gamma , \bm Z\rangle)-\lambda \| \bm Z- \bm {\tilde Z}\|_\infty \right].
\end{array}
\end{equation*}
By applying~\cite[Lemma 47]{shafieezadeh2019regularization}, the inner maximization problem can be simplified as:
\begin{equation*}
\begin{array}{rl}
\displaystyle \sup_{\bm Z\in \S^{N+1}} \mathcal L(\langle \bm \Gamma , \bm Z\rangle)-\lambda \| \bm Z- \bm {\hat Z} \|_\infty =\left\{\begin{array}{ll}
\mathcal L(\langle \bm \Gamma , \bm {\hat Z} \rangle) & \;\textup{ if } L\|\bm \Gamma\|_1\leq \lambda\\
+\infty & \;\textup{ otherwise}.
\end{array}\right.
\end{array}
\end{equation*}
Hence, the worst-case expectation reduces to 
\begin{equation*}
\displaystyle    \sup_{\Q \in \BB (\hat{\P}(\bm \theta_t))} \displaystyle\E_{ \Q} [\ell(\bm {\tilde Z}, \bm \theta)]= \rho L\|\bm \Gamma\|_1 + \E_{ \hat{\P}(\bm \theta_t) } [\mathcal L(\langle \bm \Gamma , \bm {\tilde Z} \rangle)] .
\end{equation*}
To conclude, we observe that 
    \begin{equation*} 
    \|\bm \Gamma\|_1 =\left\| \begin{bmatrix}\bm \theta \bm \theta^\top& \bm \theta\\ \bm \theta^\top & 1\end{bmatrix}\right\|_1=\tr\left(\begin{bmatrix}\bm \theta \bm \theta^\top& \bm \theta\\ \bm \theta^\top & 1\end{bmatrix}\right)=\|(\bm \theta,1)\|_2^2.
    \end{equation*}
This completes the proof. 
\end{proof}

\subsection{\texorpdfstring{Proof of Reformulation for~\Cref{thm:reformulation2}}{Proof of Theorem 2}}

\begin{proof}
Rewriting the random parameters $\bm {\tilde Z}$ as
\begin{equation*}
    \bm {\tilde Z} =\begin{bmatrix}
        \bm {\tilde Y}& \bm {\tilde z}\\
        \bm{\tilde z^\top} & {\tilde z^0}
    \end{bmatrix}  \in\S^{N+1},
\end{equation*}
and introducing the matrix variable 
\begin{equation}
\label{gammaj}
    \bm \Gamma_j=\begin{bmatrix} \bm a_j(\bm \theta) \bm a_j(\bm \theta)^\top& \bm b_j(\bm \theta) \\ \bm b_j(\bm \theta)^\top & c_j(\bm \theta)\end{bmatrix},
\end{equation}
allow us to rewrite $Q_j(\bm Z, \bm \theta)$ as $\langle \bm \Gamma_j,\bm Z \rangle$. By \cite[Remark 6.6]{mohajerin2018data}, the robust decoupled performative risk can be expressed as
\begin{equation*}
\begin{array}{rl}
&\displaystyle   \J_{\bm \theta_t}( \bm \theta) = \sup_{\Q \in \BB (\hat{\P}(\bm \theta_t))} \displaystyle\E_{ \Q} [\ell(\bm {\tilde Z}, \bm \theta)]
= \displaystyle \sum_{s\in [S]} \hat p_s(\bm \theta_t) \left[ \max_{j \in [J]} Q_j(\bm {\hat {Z}}_s, \bm \theta) \right] + \max_{j \in [J]} \rho \|\bm \Gamma_j\|_1,
\end{array}
\end{equation*}
where
    \begin{align*} 
    \|\bm \Gamma_j\|_1 = & \left\| \begin{bmatrix} \bm a_j(\bm \theta) \bm a_j(\bm \theta)^\top& \bm b_j(\bm \theta) \\ \bm b_j(\bm \theta)^\top & c_j(\bm \theta)\end{bmatrix} \right\|_1 
    = \tr\left(\begin{bmatrix} \bm a_j(\bm \theta) \bm a_j(\bm \theta)^\top& \bm b_j(\bm \theta) \\ \bm b_j(\bm \theta)^\top & c_j(\bm \theta)\end{bmatrix}\right) = \bm a_j(\bm \theta)^\top \bm a_j(\bm \theta) + c_j(\bm \theta).
    \end{align*}
Applying the exponential smoothing techniques described in~\citep[Section 2.2]{bertsekas2015convex}, we obtain the following smooth approximation of $\J_{\bm \eta} (\bm \theta)$;
\begin{equation}
\label{eq:regularization_equivalence_3}
    \J_{\bm \theta_t}^\mu(\bm \theta)= \displaystyle \mu \sum_{s\in [S]} \hat p_s(\bm \theta_t) \log\left( \sum_{j\in [J]} e^{ Q_j(\bm {\hat {Z}}_s, \bm \theta) / \mu}\right) + \displaystyle  \mu \log\left( \sum_{j\in [J]} e^{\rho \|\bm \Gamma_j\|_1 / \mu}\right),
\end{equation}
where $\mu \in \R_{++}$ is smoothing parameter. Introducing epigraphical variables, we reformulate the objective function as the optimal value of the convex program
\begin{equation}
\label{opt:0}
\begin{array}{rcll}
&\inf&\displaystyle  \sum_{s\in [S]} \hat p_s(\bm \theta_t) t_s + t_{S+1} \\
     &\st & \displaystyle t_s\in\R & \forall s\in[S+1] \\
     && \zeta_{s,j} \in \R  & \forall s\in [S+1]\; j\in[J]\\
     && \displaystyle \rho \|\bm \Gamma_j\|_1 \leq \zeta_{S+1,j}  & \forall j\in[J]\\
     && \displaystyle Q_j(\bm {\hat {Z}}_s, \bm \theta) \leq \zeta_{s,j}  & \forall s\in[S+1]\; j\in[J]\\
    &&\displaystyle \mu \log\left( \sum_{j\in [J]} e^{ \zeta_{s,j} / \mu}\right)  \leq t_s & \forall s\in[S+1] .
\end{array}
\end{equation}
The last constraint of~\eqref{opt:0} is equivalent to $\sum_{j \in [J]} \mu e^{\zeta_{s,j} / \mu - t_s/\mu} \leq \mu$ and can be reformulated using the exponential cone:
\begin{align*}
    \sum_{j \in [J]} r_{s,j} \leq \mu,\;\; \left( r_{s,j}, \mu, \zeta_{s,j} - t_s\right) \in  K_{\textnormal{exp}} \; \forall j \in [J]
\end{align*}
where the exponential cone $K_{\textnormal{exp}}$ is defined as
\begin{align*}
    K_{\textnormal{exp}} = \{(x_1,x_2, x_3) : x_1 \geq x_2 e^{x_3/x_2}, x_2 >0\} \cup \{(x_1,0, x_3) : x_1 \geq 0,x_3 \leq 0\}.
\end{align*}
To complete the proof, we substitute the expressions for $\|\bm \Gamma_j\|_1 \;\; \forall j \in [J]$ with 
\begin{align*}
    \bm \theta^\top \bm {\overline A}_j^\top \bm {\overline A}_j \bm \theta + (2\bm {\overline a}_{j}^\top \bm {\overline A}_j + \bm {\overline c}_j^\top) \bm \theta + \bm {\overline a}_{j}^\top \bm {\overline a}_{j} + {\overline c}_{j0},
\end{align*}
and for $Q_j(\bm {\hat {Z}}_s, \bm \theta) \;\;\forall s\in[S]\; j\in[J]$ with 
\begin{align*}
    \bm \theta^\top \bm {\overline A}_j^\top \bm {\hat Y}_s \bm {\overline A}_j \bm \theta + (2\bm {\overline a}_{j}^\top \bm {\hat Y}_s \bm {\overline A}_j + 2\bm {\hat z}_s^\top \bm {\overline B}_j + \hat z_s^0 \bm {\overline c}_j^\top) \bm \theta + \bm {\overline a}_{j}^\top \bm {\hat Y}_s \bm {\overline a}_{j} +  2\bm {\overline b}_{j}^\top \bm {\hat z}_s  + \hat z_s^0 {\overline c}_{j0}.
\end{align*}
This completes the proof.
\end{proof}

\subsection{\texorpdfstring{Proof of Reformulation for~\Cref{thm:reformulation3}}{Proof of Theorem 3}}

\begin{proof}
By using Definition~\ref{def:wass}, the robust decoupled performative risk $\J_{\bm \theta_t}( \bm \theta)$ can be rewritten as
\begin{align*}
   \J_{\bm \theta_t}( \bm \theta) & = \sup_{\Q \in \BB (\hat{\P}(\bm \theta_t))} \E_{\Q} [\ell(\bm {\tilde z}, \bm \theta)] \\
   & = \left\{ 
       \begin{array}{rcl}
    &\displaystyle \sup_{\Q_s \in \mcal M(\mcal Z) }&\displaystyle  \sum_{s\in [S]} \hat p_s(\bm \theta_t) \int_{\mcal Z} \ell(\bm z, \bm \theta) \Q_s(\diff{\bm z}) \\
         &\st & \displaystyle  \sum_{s\in [S]} \hat p_s(\bm \theta_t) \int_{\mcal Z} \|\bm z- \bm {\hat z}_s\|_2^2 \Q_s(\diff{\bm z}) \leq \rho^2.
    \end{array}
\right. 
\end{align*}
where $\mcal M(\mcal Z)$ denotes the space of all probability distributions $\Q$ supported on $\mcal Z$ satisfying $\E_\Q[ \|\bm z \|_2^2] = \int_{\mcal Z} \|\bm z \|_2^2 \Q(d \bm z) < \infty$. This reformulation follows from the law of total probability, where $\Q_s$ represents the conditional distribution of $\bm {\tilde z}$ given that the scenario $\bm {\hat z}_s(\bm \theta_t)$ is realized. Using the Lagrangian, we have
\begin{align*}
    \J_{\bm \theta_t}( \bm \theta) & = \displaystyle \sup_{\Q_s \in \mcal M(\mcal Z) } \inf_{\lambda \in \R_+}\displaystyle  \sum_{s\in [S]} \hat p_s(\bm \theta_t) \int_{\mcal Z} \ell(\bm z, \bm \theta) \Q_s(\diff{\bm z}) \\
    & \quad \quad + \lambda \left(\rho^2 - \displaystyle  \sum_{s\in [S]} \hat p_s(\bm \theta_t) \int_{\mcal Z} \|\bm z- \bm {\hat z}_s\|_2^2 \Q_s(\diff{\bm z}) \right).
\end{align*}
By the minimax theorem~\citep{v1928theorie}, which is valid under the assumption that $\ell$ is upper-semicontinuous and concave in $\bm z$, and the support set $\mcal Z$ is convex, we can exchange the supremum and infimum to obtain:
\begin{align*}
    \J_{\bm \theta_t}( \bm \theta) 
    & = \displaystyle \inf_{\lambda \in \R_+}\displaystyle \sup_{\Q_s \in \mcal M(\mcal Z) } \rho^2 \lambda + \sum_{s\in [S]} \hat p_s(\bm \theta_t) \int_{\mcal Z} \left( \ell(\bm z, \bm \theta) - \lambda \|\bm z- \bm {\hat z}_s\|_2^2 \right) \Q_s(\diff{\bm z}).
\end{align*}
From the fact that the space $\mcal M(\mcal Z)$ contains all the Dirac distributions supported on $\mcal Z$, we have
\begin{align*}
    \J_{\bm \theta_t}( \bm \theta) 
    & = \displaystyle \inf_{\lambda \in \R_+}\displaystyle \rho^2 \lambda +   \sum_{s\in [S]} \hat p_s(\bm \theta_t) \sup_{\bm z \in \mcal Z} ( \ell(\bm z, \bm \theta) - \lambda \|\bm z- \bm {\hat z}_s\|_2^2).
\end{align*}
Adding a regularized term $\tau \lambda^2$ where $\tau \in \R_{++}$ is a positive constant, we have 
\begin{align*}
    \J_{\bm \theta_t}( \bm \theta) 
    & \leq  \displaystyle \inf_{\lambda \in \R_+}\displaystyle \sum_{s\in [S]} \hat p_s(\bm \theta_t) \sup_{\bm z \in \mcal Z} ( \ell(\bm z, \bm \theta) - \lambda \|\bm z- \bm {\hat z}_s\|_2^2 + \rho^2 \lambda + \tau\lambda^2).
\end{align*}
Therefore, minimizing the right-hand side provides an upper bound on $\J_{\bm \theta_t}( \bm \theta)$. Next, we introduce auxiliary variables $t_s \; \forall s \in [S]$, which yields the equivalent formulation for the right hand side of the above inequality
\begin{align}
\label{opt:3}
       \begin{array}{rcl}
    &\displaystyle \inf &\displaystyle   \sum_{s\in [S]} \hat p_s(\bm \theta_t) t_s + \rho^2 \lambda + \tau\lambda^2\\
     &\st & \displaystyle \lambda \in \R_+,\;\; t_s\in\R\;\forall s\in[S]  \\
    && \displaystyle  \sup_{\bm z \in \mcal Z} ( \ell(\bm z, \bm \theta) - \lambda \|\bm z- \bm {\hat z}_s\|_2^2) \leq t_s \;\;\forall s\in[S].
    \end{array}
\end{align}
By the definition of conjugate functions, we have
\begin{align*}
    \displaystyle  \sup_{\bm z \in \mcal Z} ( \ell(\bm z, \bm \theta) - \lambda \|\bm z- \bm {\hat z}_s\|_2^2) = [-\ell + \chi_{\mcal Z} + \lambda \|\bm z- \bm {\hat z}_s\|_2^2 ]^*(0),
\end{align*}
where $\chi_{\mcal Z}$ denotes the characteristic function of $\mcal Z$.
Based on results from~\cite[Theorem 4.2]{mohajerin2018data},~\citep[Theorem 11.23]{rockafellar2009variational}, and~\citep[Lemma B.8]{zhen2021mathematical}, the conjugate functions of infimal convolutions and $2$-norm balls is given by
\begin{align*}
    [-\ell + \chi_{\mcal Z} + \lambda \|\bm z- \bm {\hat z}_s\|_2^2 ]^*(0) = \inf_{\bm r_s, \bm \zeta_s}([-\ell]^*(\bm r_{s} - \bm \zeta_s, \bm \theta) + \sigma_{\mcal Z}(\bm \zeta_s) + [\lambda \|\bm z- \bm {\hat z}_s\|_2^2]^*(-\bm r_s)).
\end{align*}
with
\begin{align*}
    [\lambda \|\bm z- \bm {\hat z}_s\|_2^2]^*(-\bm r_s) = \sup_{\bm v_s}(-\bm r_s^\top \bm v_s - \lambda \|\bm z- \bm {\hat z}_s\|_2^2) = -\bm r_s^\top \bm {\hat z}_s + \frac{1}{4\lambda} \left\| \bm r_s \right\|_2^2,
\end{align*}
Substituting this back into the formulation~\eqref{opt:3}, we thus obtain that $  \J_{\bm \theta_t}( \bm \theta) $ is upper bounded by the optimal value of the following convex program:
\begin{equation*}
\begin{array}{rcl}
&\inf&\displaystyle  \sum_{s\in [S]} \hat p_s(\bm \theta_t) \left([-\ell]^*(\bm r_{s} - \bm \zeta_s, \bm \theta) + \sigma_{\mcal Z}(\bm \zeta_s) - \bm r_s^\top \bm {\hat z}_s + \frac{1}{4\lambda}  \left\| \bm r_s \right\|_2^2 \right)  + \rho^2 \lambda + \tau\lambda^2 \\
     &\st & \displaystyle \lambda \in \R_+,\;\; t_s\in\R\;\forall s\in[S],\;\; \bm r_s \in \R^m \;\forall s\in[S], \;\; \bm \zeta_{s} \in \R^m \;\forall s\in [S].
\end{array}
\end{equation*}
Combining with outer minimization over $\bm \theta \in \bm \Theta$ completes the proof.
\end{proof}

\section{Deferred Proofs Related to Convergence}

\begin{lemma}
\label{lem:case2}
    Consider the loss function defined in~\Cref{thm:reformulation2} and assume $\bm \Theta$ is bounded. We define the smoothed loss function 
\begin{align}
\label{loss:l_mu}
    \ell_\mu(\bm Z, \bm \theta)=  \mu \log\left( \sum_{j\in [J]} e^{ Q_j(\bm Z, \bm \theta) / \mu}\right),
\end{align} 
where $\bm Z\sim\hat{\P}(\bm \theta_t)$ satisfies and $\|\bm Z\|_2 \leq k_1$ for some constant $k_1 <\infty$. Then the gradient $\nabla_{\bm \theta} \ell_\mu(\bm Z, \bm \theta)$ is $\beta$-Lipschitz in $\bm Z$ for $\beta = J d k_3 \left( \frac{k_1 k_2}{\mu} + 1 \right)$ for some constants $k_2, k_3 < \infty$ defined below. 
\end{lemma}

\begin{proof}
Since the coefficients $\bm a_j(\bm \theta), \bm b_j(\bm \theta)$, and $c_j(\bm \theta)$ are affine for all $j\in [J]$ and $\bm \Theta$ is bounded, there exist some constants $k_2, k_3 < \infty$ such that
\begin{itemize}
    \item $\| \bm \Gamma_j \|_2 \leq k_2$ for all $j \in [J]$,
    \item $\left\| \nabla_{\theta_i} \bm \Gamma_j \right\|_2 \leq k_3$ for all $i \in [d], j \in [J]$.
\end{itemize}
By definition, we have the gradient of the smoothed loss~\eqref{loss:l_mu}:
\begin{align*}
\label{eq:regularization_equivalence_3}
    \nabla_{\bm \theta} \ell_\mu(\bm Z, \bm \theta) & = \displaystyle \sum_{j \in [J]} w_j(\bm Z, \bm \theta) \nabla_{\bm \theta} Q_j(\bm Z, \bm \theta)
\end{align*}
where the softmax weights are
\begin{align*}
    w_j(\bm Z, \bm \theta) = \frac{ e^{ Q_j(\bm Z, \bm \theta) / \mu} }{  \sum_{i\in [J]} e^{ Q_i(\bm Z, \bm \theta) / \mu} },
\end{align*}
and the gradient of $Q_j$ with respect to $\bm \theta$ is given by
\begin{align*}
   \nabla_{\bm \theta} Q_j(\bm Z, \bm \theta) = 2\bm {\overline A}_j^\top \bm Y \bm {\overline A}_j \bm \theta + 2 \bm {\overline A}_j^\top \bm Y^\top \bm {\overline a}_{j} + 2\bm {\overline B}_j^\top \bm z + z^0 \bm {\overline c}_j.
\end{align*}
To prove that $\nabla_{\bm \theta} \ell_\mu$ is Lipschitz in $\bm Z$, we examine
    $\|  \nabla_{\bm \theta} \ell_\mu(\bm Z_1, \bm \theta) -  \nabla_{\bm \theta} \ell_\mu(\bm Z_2, \bm \theta) \|_2$.
We first decompose the difference and apply the triangle inequality
\begin{equation}
\label{eq:lip_z}
\begin{array}{cl}
    & \displaystyle \left\| \sum_{j \in [J]} w_j(\bm Z_1, \bm \theta) \nabla_{\bm \theta} Q_j(\bm Z_1, \bm \theta)  -  \sum_{j \in [J]} w_j(\bm Z_2, \bm \theta) \nabla_{\bm \theta} Q_j(\bm Z_2, \bm \theta) \right\|_2\\
    \stackrel{}{\leq} & \displaystyle \left\| \sum_{j \in [J]} w_j(\bm Z_1, \bm \theta) \nabla_{\bm \theta} Q_j(\bm Z_1, \bm \theta)  - \sum_{j \in [J]} w_j(\bm Z_2, \bm \theta) \nabla_{\bm \theta} Q_j(\bm Z_1, \bm \theta)  \right\|_2  \\
     & \displaystyle + \left\| \sum_{j \in [J]} w_j(\bm Z_2, \bm \theta) \nabla_{\bm \theta} Q_j(\bm Z_1, \bm \theta) - \sum_{j \in [J]} w_j(\bm Z_2, \bm \theta) \nabla_{\bm \theta} Q_j(\bm Z_2, \bm \theta) \right\|_2\\
    \stackrel{}{=} & \displaystyle \left\| \sum_{j \in [J]} ( w_j(\bm Z_1, \bm \theta) -w_j(\bm Z_2, \bm \theta)) \nabla_{\bm \theta} Q_j(\bm Z_1, \bm \theta) \right\|_2 + \left\|  \sum_{j \in [J]} (\nabla_{\bm \theta} Q_j(\bm Z_1, \bm \theta) - \nabla_{\bm \theta} Q_j(\bm Z_2, \bm \theta)) \right\|_2.
\end{array}
\end{equation}
Next, we bound the terms involved:
\begin{itemize}
    \item Weight difference:
    \begin{equation*}
\begin{array}{cl}
    \|  w_j(\bm Z_1, \bm \theta) -  w_j(\bm Z_2, \bm \theta) \|_2 
    & \stackrel{}{=}  \left\| \displaystyle \frac{ e^{ Q_j(\bm Z_1, \bm \theta) / \mu} }{  \sum_{i\in [J]} e^{ Q_i(\bm Z_1, \bm \theta) / \mu} }  - \frac{ e^{ Q_j(\bm Z_2, \bm \theta) / \mu} }{  \sum_{i\in [J]} e^{ Q_i(\bm Z_2, \bm \theta) / \mu} }  \right\|_2 \\
    & \stackrel{(a)}{\leq}  \displaystyle \frac{1}{\mu} \left\| Q_j(\bm Z_1, \bm \theta) - Q_j(\bm Z_2, \bm \theta) \right\|_2 \\
    & \stackrel{}{=}   \displaystyle \frac{1}{\mu} \left\| \langle \bm \Gamma_j,\bm Z_1 \rangle - \langle \bm \Gamma_j,\bm Z_2 \rangle \right\|_2 \\
    & \stackrel{(b)}{\leq}  \displaystyle \frac{1}{\mu} \left\|  \bm \Gamma_j \right\|_2 \left\|  \bm Z_1 - \bm Z_2 \right\|_2 \\
    & \stackrel{}{\leq}  \displaystyle \frac{k_2}{\mu} \left\|  \bm Z_1 - \bm Z_2 \right\|_2,
\end{array}
\end{equation*}
where (a) comes from the Lipschitz continuity of the softmax function~\citep{gao2017properties}, and (b) uses the Cauchy–Schwarz inequality.
\item Gradient: 
    \begin{equation*}
        \begin{array}{cl}
         \|  \nabla_{\bm \theta} Q_j(\bm Z, \bm \theta) \|_2 
        \stackrel{}{=} & \displaystyle \sum_{i \in [d] }\left\| \langle \nabla_{\theta_i} \bm \Gamma_j,\bm Z \rangle \right\|_2 
        \stackrel{(a)}{\leq} \displaystyle \sum_{i \in [d] } \left\| \nabla_{\theta_i} \bm \Gamma_j \right\|_2 \left\|  \bm Z\right\|_2
        \stackrel{(b)}{\leq}  \displaystyle k_1 k_3 d ,
        \end{array}
    \end{equation*}
    where $\nabla_{\theta_i} \bm \Gamma_j$ is a matrix whose $(i_1,i_2)$-th matrix slice is the gradient of the $(i_1,i_2)$-th component of the matrix with respect to $\theta_i$.
\item Gradient difference:
\begin{align*}
    \|  \nabla_{\bm \theta} Q_j(\bm Z_1, \bm \theta) - \nabla_{\bm \theta} Q_j(\bm Z_2, \bm \theta) \|_2 \leq d \left\| \nabla_{\theta_i} \bm \Gamma_j \right\|_2 \| \bm Z_1 - \bm Z_2\|_2 \leq d k_3 \| \bm Z_1 - \bm Z_2\|_2.
\end{align*}
\end{itemize}
Substituting the above bounds into~\eqref{eq:lip_z}, we obtain
\begin{equation*}
    \|  \nabla_{\bm \theta} \ell_\mu(\bm Z_1, \bm \theta) -  \nabla_{\bm \theta} \ell_\mu(\bm Z_2, \bm \theta) \|_2   \leq \displaystyle Jdk_3 \left( \frac{k_1 k_2}{\mu} +  1 \right) \left\|  \bm Z_1 - \bm Z_2 \right\|_2.
\end{equation*} Thus, the claim follows. 

\end{proof}

\begin{lemma}
\label{lem:case2_strongly}
    Consider the loss function defined in~\Cref{thm:reformulation2}, and assume that $\bm {\overline A}_j^\top \bm {\overline A}_j \succ \bm 0$ for all $j\in[J]$. Define the smoothed loss function
\begin{align}
\label{loss:l_mu_p}
    \ell_{\mu}^{\textup{reg}}(\bm \theta)= \mu \log\left( \sum_{j\in [J]} e^{\rho \|\bm \Gamma_j\|_1 / \mu}\right),
\end{align} 
where $\bm \Gamma_j$ is defined in~\eqref{gammaj}. Then $\ell_\mu^{\textup{reg}}(\bm \theta)$ is $\rho \alpha$-strongly convex in $\bm \theta$ where
$$\displaystyle \alpha = 2\min_{j\in [J]} \lambda_{\min}(\bm {\overline A}_j^\top \bm {\overline A}_j).$$
\end{lemma}

\begin{proof}
By definition, the gradient of the smoothed loss~\eqref{loss:l_mu_p} is given by
\begin{align*}
    \nabla_{\bm \theta} \ell_\mu^{\textup{reg}}(\bm \theta) & = \displaystyle \rho \sum_{j \in [J]} w_j(\bm \theta) \nabla_{\bm \theta} \bm \Gamma_j(\bm \theta),
\end{align*}
where the softmax weights are defined as
\begin{align*}
    w_j(\bm \theta) = \frac{ e^{\rho \bm \Gamma_j(\bm \theta) / \mu} }{  \sum_{i\in [J]} e^{\rho \bm \Gamma_i(\bm \theta) / \mu} }\quad\forall j\in[J],
\end{align*}
and the gradient of $\bm \Gamma_j$ with respect to $\bm \theta$ is given by
\begin{align*}
   \nabla_{\bm \theta} \bm \Gamma_j(\bm \theta) = 2\bm {\overline A}_j^\top \bm {\overline A}_j \bm \theta + 2 \bm {\overline A}_j^\top \bm {\overline a}_{j}  + \bm {\overline c}_j.
\end{align*}
Using the product rule and softmax identity:
\begin{align*}
     \nabla_{\bm \theta}^2 \ell_\mu^{\textup{reg}}(\bm \theta) & = \displaystyle \rho \sum_{j \in [J]} w_j(\bm \theta) \nabla_{\bm \theta}^2 \bm \Gamma_j(\bm \theta) + \frac{\rho^2}{\mu} \sum_{j \in [J]} w_j(\bm \theta) \left[ \nabla_{\bm \theta} \bm \Gamma_j(\bm \theta) - \bar{g} \right] \left[ \nabla_{\bm \theta} \bm \Gamma_j(\bm \theta) - \bar{g} \right]^\top\\
     & = \displaystyle 2\rho \sum_{j \in [J]} w_j(\bm \theta) \bm {\overline A}_j^\top \bm {\overline A}_j + \frac{\rho^2}{\mu} \sum_{j \in [J]} w_j(\bm \theta) \left[ \nabla_{\bm \theta} \bm \Gamma_j(\bm \theta) - \bar{g} \right] \left[ \nabla_{\bm \theta} \bm \Gamma_j(\bm \theta) - \bar{g} \right]^\top, 
\end{align*}
where 
\begin{align*}
    \bar{g} = \rho \sum_{j \in [J]} w_j(\bm \theta)\nabla_{\bm \theta} \bm \Gamma_j(\bm \theta).
\end{align*}
Notice that the first term is a convex combination of positive definite matrices $\bm {\overline A}_j^\top \bm {\overline A}_j$, so it is positive definite. The second term is a Gram matrix, hence positive semidefinite. Therefore, the minimum eigenvalue of the Hessian is lower bounded by the minimum eigenvalue of the first term and we have
\begin{align*}
     \nabla_{\bm \theta}^2 \ell_\mu^{\textup{reg}}(\bm \theta) \succeq 2\rho \min_{j\in [J]} \lambda_{\min}(\bm {\overline A}_j^\top \bm {\overline A}_j) \mathbb I.
\end{align*}
Thus, the claim follows. 
\end{proof}

\begin{corollary}
\label{corl:1}
    Consider the setting of~\Cref{thm:reformulation2}, and define the following smoothed loss function
    \begin{equation*}
    g(\bm Z, \bm{\theta}) = \ell_\mu(\bm Z, \bm \theta) + \ell_\mu^{reg}(\bm \theta)
    \end{equation*}
    where $\ell_\mu(\bm Z, \bm \theta)$ is the smoothed loss function defined in~\eqref{loss:l_mu}, and $\ell_\mu^{reg}(\bm \theta)$ is defined in~\eqref{loss:l_mu_p}. 
    Suppose that  $\bm{\overline A}_j^\top \bm{\overline A}_j \succ 0$ for all $j \in [J]$. Then $g(\bm Z, \bm{\theta})$ is $\rho \alpha$-strongly convex in $\bm \theta$ with $\alpha = 2 \min_{j\in [J]} \lambda_{\min}(\bm{\overline A}_j^\top \bm{\overline A}_j)$, and the gradient $\nabla_{\bm \theta} g(\bm Z, \bm \theta)$ is $\beta$-Lipschitz in $\bm Z$ for $\beta = J d k_3 \left( \frac{k_1 k_2}{\mu} + 1 \right)$ for some constants $k_1, k_2, k_3 <  \infty$.
\end{corollary}

\begin{proof}

    By a standard result in convex analysis, the function $\ell_\mu(\bm{Z}, \bm{\theta})$ is convex in $\bm{\theta}$, as the log-sum-exp operator preserves convexity when applied to a collection of convex functions. 
    Furthermore,~\Cref{lem:case2_strongly} establishes that $\ell_\mu^{reg}(\bm \theta)$ is $\rho \alpha$-strongly convex, where
    \[
     \alpha = \min_j \lambda_{\min}(\bm{\overline A}_j^\top \bm{\overline A}_j).
    \]

    As the sum of a convex function and a strongly convex function is strongly convex, $g(\bm Z, \bm{\overline \theta})$ is $\rho\alpha$-strongly convex. 
    
    The dependence of $g$ on $\bm Z$ comes only through $\ell_\mu(\bm Z, \bm \theta)$. Therefore, the Lipschitz continuity of the gradient $\nabla_{\bm \theta} g(\bm Z, \bm \theta)$ with respect to $\bm Z$ follows from Lemma 2, which provides the bound on the Lipschitz constant $\beta$. This concludes the proof.
\end{proof}

\begin{lemma}
\label{lemma:strong_convexity_3}
Assume that the loss function $\ell(\bm z, \bm \theta)$ is concave in $\bm z$, and that the set $\mcal Z$ has a finite diameter $D = \sup_{\bm z, \bm z' \in \mcal Z} \| \bm z - \bm z'\|_2<\infty$. Define the function 
\[g(\bm v, (\bm \theta, \lambda)) = \sup_{\bm z\in \mcal Z} \ell(\bm z, \bm \theta) -\lambda \| \bm z - \bm v \|_2^2  + \rho^2\lambda + \tau \lambda^2.
\]
Then the function $g$ is $\alpha$-strongly convex in $(\bm \theta, \lambda)$ where 
$$\alpha = \min(\gamma, 2\tau),$$ 
and the gradient $\nabla_{(\bm \theta,\lambda)} g(\bm v, (\bm \theta, \lambda))$ is $(\beta +4D)$-Lipschitz in $\bm v$.
\end{lemma}

\begin{proof}
Define
\[\phi((\bm \theta, \lambda), \bm v, \bm z) = \ell(\bm z, \bm \theta) -\lambda \| \bm z - \bm v \|_2^2  + \rho^2\lambda + \tau \lambda^.2
\]
Then we have 
\[g(\bm v, (\bm \theta, \lambda)) = \sup_{\bm z\in \mcal Z} \phi((\bm \theta, \lambda), \bm v, \bm z).
\] 
Since $\phi((\bm \theta, \lambda), \bm v, \cdot)$ is $2\lambda$-strongly concave in $\bm v$, the maximizer 
\[
\bm z^*((\bm \theta, \lambda), \bm v) = \argmax_{\bm z \in \mcal Z} \phi((\bm \theta, \lambda), \bm v, \bm z)\] 
is unique. By Danskin's theorem~\citep[Section~6.11]{bertsekas2009convex}, the function $g$ is differentiable, and its gradient with respect to $(\bm \theta, \lambda)$ is
\begin{align*}
    \nabla_{(\bm \theta, \lambda)} g(\bm v, (\bm \theta, \lambda)) & = \nabla_{(\bm \theta, \lambda)} \phi((\bm \theta, \lambda), \bm v, \bm z^* ) \\
     & = \begin{bmatrix}
    \nabla_{\bm \theta} \ell(\bm z^*, \bm \theta )\\
     - \| \bm z^* - \bm v \|_2^2 + \rho^2 + 2\tau \lambda 
\end{bmatrix},
\end{align*} 
where $\bm z^* = \bm z^*((\bm \theta, \lambda), \bm v)$. The Hessian is
\begin{equation}
\label{hessian:3}
\begin{array}{cl}
    \nabla_{(\bm \theta, \lambda)}^2 g(\bm v, (\bm \theta, \lambda)) & = \nabla_{(\bm \theta, \lambda)}^2 \phi((\bm \theta, \lambda), \bm v, \bm z^*) \\
     & = \begin{bmatrix}
    \nabla_{\bm \theta}^2 \ell(\bm z^*, \bm \theta ) & 0 \\
    0 & 2\tau 
\end{bmatrix} \succeq \alpha \mathbb{I}.
\end{array}
\end{equation} 
where $\alpha = \min(\gamma, 2\tau)$. This proves strong convexity in $(\bm \theta, \lambda)$.

Before we move to the Lipschitz smoothness, we first prove several inequalities. Fix $\bm z_1, \bm z_2 \in \mcal Z$, we have
\begin{equation}
\label{eq:smooth}
\begin{array}{cl}
     & \left\| \| \bm z^*((\bm \theta, \lambda), \bm v_2) - \bm v_2 \|_2^2 - \| \bm z^*((\bm \theta, \lambda), \bm v_1) - \bm v_1 \|_2^2 \right\|_2 \\
    \stackrel{(a)}{=} & |\left(\| \bm z^*((\bm \theta, \lambda), \bm v_2) - \bm v_2 \|_2 + \| \bm z^*((\bm \theta, \lambda), \bm v_1) - \bm v_1 \|_2 \right) \\
    & \times \left(\| \bm z^*((\bm \theta, \lambda), \bm v_2) - \bm v_2 \|_2 - \| \bm z^*((\bm \theta, \lambda), \bm v_1) - \bm v_1 \|_2 \right)| \\
    \stackrel{(b)}{\leq} & 2D \|\bm z^*((\bm \theta, \lambda), \bm v_2) - \bm z^*((\bm \theta, \lambda), \bm v_1) \|_2 + 2D\|\bm v_2 - \bm v_1 \|_2
\end{array}
\end{equation}
where (a) uses the difference of squares, and (b) uses the triangular inequality and the boundness of $\mcal Z$. Next we prove $\bm z^*((\bm \theta, \lambda), \bm v)$ is 1-Lipschitz continuous in $\bm v$ as follows.
\begin{align*}
    & 2\lambda \| \bm z^*((\bm \theta, \lambda), \bm v_1) -  \bm z^*((\bm \theta, \lambda), \bm v_2) \|_2^2 \\
    \stackrel{(a)}{\leq} & \langle - \nabla_{\bm z} \phi((\bm \theta, \lambda), \bm v_2, \bm z^*((\bm \theta, \lambda), \bm v_2)) + \nabla_{\bm z} \phi((\bm \theta, \lambda), \bm v_2, \bm z^*((\bm \theta, \lambda), \bm v_1)), \\
    & \hspace{.5in} \bm z^*((\bm \theta, \lambda), \bm v_2) - \bm z^*((\bm \theta, \lambda), \bm v_1) \rangle \\
    \stackrel{(b)}{\leq} & \langle  \nabla_{\bm z} \phi((\bm \theta, \lambda), \bm v_2, \bm z^*((\bm \theta, \lambda), \bm v_1)), \bm z^*((\bm \theta, \lambda), \bm v_2) - \bm z^*((\bm \theta, \lambda), \bm v_1) \rangle \\
    \stackrel{(c)}{\leq} & \langle  \nabla_{\bm z} \phi((\bm \theta, \lambda), \bm v_2, \bm z^*((\bm \theta, \lambda), \bm v_1)) - \nabla_{\bm z} \phi((\bm \theta, \lambda), \bm v_1, \bm z^*((\bm \theta, \lambda), \bm v_1)), \\
    & \hspace{.5in} \bm z^*((\bm \theta, \lambda), \bm v_2) - \bm z^*((\bm \theta, \lambda), \bm v_1) \rangle \\
    \stackrel{(d)}{\leq} & 2\lambda \|  \bm v_1 - \bm v_2\| \|\bm z^*((\bm \theta, \lambda), \bm v_2) - \bm z^*((\bm \theta, \lambda), \bm v_1) \|
\end{align*}
where (a) uses strong concavity of $\phi((\bm \theta, \lambda), \bm v, \cdot)$, (b) and (c) come from the first order optimality conditions for $\bm z^*((\bm \theta, \lambda), \bm v_1))$ and $\bm z^*((\bm \theta, \lambda), \bm v_2))$:
\begin{align*}
    \langle \nabla_{\bm z} \phi((\bm \theta, \lambda), \bm v, \bm z^*((\bm \theta, \lambda), \bm v)), \bm z - \bm z^*((\bm \theta, \lambda), \bm v) \rangle \leq 0,
\end{align*}
and (d) uses Cauchy-Schwarz inequality. Hence, 
\begin{align}
\label{eq:1-lip}
    \| \bm z^*((\bm \theta, \lambda), \bm v_1) -  \bm z^*((\bm \theta, \lambda), \bm v_2) \|_2 \leq \|  \bm v_1 - \bm v_2\|
\end{align}
Finally we show $ \nabla_{(\bm \theta, \lambda)} g(\bm v, (\bm \theta, \lambda))$ is Lipschitz in $\bm v$, consider the gradient difference
\begin{equation*}
\begin{array}{cl}
    & \| \nabla_{(\bm \theta, \lambda)} g(\bm v_1, (\bm \theta, \lambda)) - \nabla_{(\bm \theta, \lambda)} g(\bm v_2, (\bm \theta, \lambda)) \|_2 \\
     = & \left\| \begin{bmatrix}
    \nabla_{\bm \theta} \ell(\bm z^*((\bm \theta, \lambda), \bm v_1), \bm \theta ) - \nabla_{\bm \theta} \ell(\bm z^*((\bm \theta, \lambda), \bm v_2), \bm \theta )\\
    \| \bm z^*((\bm \theta, \lambda), \bm v_2) - \bm v_2 \|_2^2 - \| \bm z^*((\bm \theta, \lambda), \bm v_1) - \bm v_1 \|_2^2
\end{bmatrix} \right\|_2 \\
    \stackrel{(a)}{\leq} & \left\| \nabla_{\bm \theta} \ell(\bm z^*((\bm \theta, \lambda), \bm v_1), \bm \theta ) - \nabla_{\bm \theta} \ell(\bm z^*((\bm \theta, \lambda), \bm v_2), \bm \theta )\right\|_2 \\
    & + \left\| \| \bm z^*((\bm \theta, \lambda), \bm v_2) - \bm v_2 \|_2^2 - \| \bm z^*((\bm \theta, \lambda), \bm v_1) - \bm v_1 \|_2^2 \right\|_2 \\
    \stackrel{(b)}{\leq} & \beta \| \bm z^*((\bm \theta, \lambda), \bm v_1) -  \bm z^*((\bm \theta, \lambda), \bm v_2) \|_2 + \\
    & 2D \|\bm z^*((\bm \theta, \lambda), \bm v_2) - \bm z^*((\bm \theta, \lambda), \bm v_1) \|_2 + 2D\|\bm v_2 - \bm v_1 \|_2 \\
    \stackrel{(c)}{\leq} & (\beta +4D) \|\bm v_2 - \bm v_1 \|_2 
\end{array}
\end{equation*}
where (a) comes from the triangular inequality, (b) uses the $\beta$-jointly smoothness of $\ell(\bm z, \bm \theta)$ and~\ref{eq:smooth}, and (c) uses ~\ref{eq:1-lip}. Hence the gradient $\nabla_{\bm \theta,\lambda} g(\bm v, (\bm \theta, \lambda))$ is $(\beta +4D)$-Lipschitz in $\bm v$.
\end{proof}

\subsection{\texorpdfstring{Proof of~\Cref{thm:convergence_a}}{Proof of Theorem 2}}

\begin{proof}
Let $G(\bm \theta_t)$ denote an optimal solution of~\eqref{eq:RRRM} at iteration $t$, i.e.,
\begin{equation*}
{\bm \theta}_{t+1} = G(\bm \theta_t) \in \argmin_{\bm{ \theta}\in \bm{ \Theta} }  \J_{\bm \theta_t}( \bm{ \theta} ).
\end{equation*}
where $\bm \theta_t$ is the current solution and $\bm{\theta}_{t+1}\in \bm{\Theta}$ denotes the optimal solution for next iteration. 

We first prove the convergence result for~\Cref{thm:reformulation1}. Observe that
\begin{align*}
    \J_{\bm \theta_t}( \bm{\theta} ) = \E_{ \hat{\P}(\bm \theta_t) } [g(\bm Z, \bm{\theta})]
\end{align*}
where $g(\bm Z, \bm{\overline \theta}) = \ell (\bm Z,\bm \theta) + \rho L\|(\bm \theta,1)\|_2^2$. 

Fix $\bm \eta, \bm \eta' \in \bm \Theta$. Since $\J_{\bm \eta}(\cdot)$ is $(\gamma +2\rho L)$-strongly convex, where $2\rho L$ comes from the strong convexity of the regularization term $\rho L\|(\bm \theta,1)\|_2^2$. we have 
\begin{align*}
\J_{\bm \eta}(G(\bm \eta))-\J_{\bm \eta}(G(\bm \eta')) & \geq (G(\bm \eta)-G(\bm \eta'))^\top \nabla \J_{\bm \eta}(G(\bm \eta')) + \frac{\gamma +2\rho L}{2}\left\|G(\bm \eta)-G(\bm \eta')\right\|_2^2,  \\
\J_{\bm \eta}(G(\bm \eta'))-\J_{\bm \eta}(G(\bm \eta)) & \geq  \frac{\gamma +2\rho L}{2}\left\|G(\bm \eta)-G(\bm \eta')\right\|_2^2,
\end{align*}
where the second inequality follows from the fact that \[(G(\bm \eta')-G(\bm \eta))^\top \nabla \J_{\bm \eta}(G(\bm \eta))\geq 0\] in view of the first-order optimality condition in Lemma \ref{lemma:first_order_opt_condition} since $G(\bm \eta)\in\argmin_{\bm{\overline \theta}  \in \bm{\overline \Theta} }\J_{\bm \eta}(  \bm{\overline \theta} )$.
Combining the two inequalities, we obtain
\begin{equation}
\label{eq:first_ineq}
\begin{array}{cl}
    (\gamma +2\rho L) \|G(\bm \eta) - G(\bm \eta')\|_2^2 
     &\leq - (G(\bm \eta) - G(\bm \eta'))^\top \nabla  \J_{\bm \eta}(G(\bm \eta'))\\
     &\leq (G(\bm \eta) - G(\bm \eta'))^\top[\nabla  \J_{\bm \eta'}(G(\bm \eta'))-\nabla  \J_{\bm \eta}(G(\bm \eta'))],
\end{array}
\end{equation}
where the second inequality follows from the fact that $ (G(\bm \eta)-G(\bm \eta'))^\top \nabla \J_{\bm \eta'}(G(\bm \eta'))\geq 0$ in view of the first-order optimality condition of $G(\bm \eta')$. 
Next, we will upper bound \eqref{eq:first_ineq} using Cauchy-Schwarz inequality, as follows:
\begin{align*}
    & (G(\bm \eta) - G(\bm \eta'))^\top[\nabla  \J_{\bm \eta'}(G(\bm \eta'))-\nabla  \J_{\bm \eta}(G(\bm \eta'))] \\
    \stackrel{}{\leq} & \|(G(\bm \eta) - G(\bm \eta'))\|_2\|\nabla  \J_{\bm \eta'}(G(\bm \eta'))-\nabla  \J_{\bm \eta}(G(\bm \eta'))\|_2 \\
    \stackrel{(a)}{=} &  \|(G(\bm \eta) - G(\bm \eta'))\|_2 \left\|    \E_{ \hat{\P}(\bm \eta')} [\nabla g(\bm {\tilde Z}; G(\bm \eta') )]  -   \E_{ \hat{\P}(\bm \eta)} [\nabla g(\bm {\tilde Z}; G(\bm \eta') )]  \right\|_2 \\
    \stackrel{(b)}{\leq} & \| G(\bm \eta) - G(\bm \eta') \|_2 \cdot \epsilon \beta \| \bm \eta - \bm \eta' \|_2.
\end{align*}
Here, (a) follows from the representation of the loss function, while (b) uses the Kantorovich-Rubinstein Lemma \ref{lemma:duality} since the loss function is $\beta$-jointly smooth from Lemma~\ref{lemma:strong_convexity_3} and the map $\hat{\P}(\bm \theta)$ is $\epsilon$-sensitive. Combining this bound with~\eqref{eq:first_ineq}, we get
\begin{align*}
   \|G(\bm \eta) - G(\bm \eta')\|_2 \leq \frac{\epsilon \beta}{\gamma + 2\rho L} \|\bm \eta -\bm \eta'\|_2.
\end{align*}
Our claim (a) is then established by simply performing the change of variables $\bm \eta\leftarrow \bm \theta$ and $\bm \eta'\leftarrow \bm \theta'$. 

To prove claim (b), we observe that ${\bm \theta}_{t} = G({\bm \theta_{t-1}})$ by the definition of~\eqref{eq:RRRM}, and $ \thetaRPS = G({\thetaRPS})$ by the definition of stability. Applying the result of the claim (a) yields
\begin{equation*}
\|\bm \theta_{t}- \thetaRPS \|_2 \leq \frac{\epsilon \beta}{\gamma + 2\rho L} \|\bm \theta_{t-1} - \thetaRPS\|_2 \leq \left(\frac{\epsilon \beta}{\gamma + 2\rho L} \right)^t \|\bm \theta_{0} - \thetaRPS\|_2.
\end{equation*}
Setting the right-hand side expression to be at most $\delta$ and solving for $t$ completes the proof for~\Cref{thm:reformulation1}.

For~\Cref{thm:reformulation2}. Observe that
\begin{align*}
    \J_{\bm \theta_t}^\mu( \bm{\theta} ) = \E_{ \hat{\P}(\bm \theta_t) } [g(\bm Z, \bm{\theta})],
\end{align*}
where $g(\bm Z, \bm{\theta}) = \ell_\mu(\bm Z, \bm \theta) + \ell_\mu^{reg}(\bm \theta)$. Here $\ell_\mu(\bm Z, \bm \theta)$ is the smoothed loss function defined in~\eqref{loss:l_mu}, and $\ell_\mu^{reg}(\bm \theta)$ is defined in~\eqref{loss:l_mu_p}. From~\Cref{corl:1}, we know that $g(\bm Z, \bm{\theta})$ is $\rho \alpha$-strongly convex in $\bm \theta$ with $\alpha = 2 \min_{j\in [J]} \lambda_{\min}(\bm{\overline A}_j^\top \bm{\overline A}_j)$, and the gradient $\nabla_{\bm \theta} g(\bm Z, \bm \theta)$ is $\beta$-Lipschitz in $\bm Z$ for $\beta = J d k_3 \left( \frac{k_1 k_2}{\mu} + 1 \right)$ for some constants $k_1, k_2, k_3 <  \infty$. Using the same techniques as in the proof of~\Cref{thm:reformulation1}, we can therefore establish the desired result for~\Cref{thm:reformulation2}.

Finally, for ~\Cref{thm:reformulation3}, one can observe that
\begin{align*}
    \J_{\bm \theta_t}^\tau( \bm{\overline \theta} ) = \E_{ \hat{\P}(\bm \theta_t) } [g(\bm Z, \bm{\overline \theta} )],
\end{align*}
where 
\[g(\bm Z, \bm{\overline \theta}) = \sup_{\bm z\in \mcal Z} \ell(\bm z, \bm \theta) -\lambda \| \bm z - \bm Z \|_2^2  + \rho^2 \lambda + \tau \lambda^2.\]

From~\Cref{lemma:strong_convexity_3}, we know that $g(\bm Z, \bm{\overline \theta})$ is $\alpha$-strongly convex in $\bm{\overline \theta} $ with $\alpha= \min(\gamma, 2\tau)$, and that its gradient $\nabla_{\bm {\overline \theta}} g(\bm Z, \bm{\overline \theta})$ is $(\beta +4D)$-Lipschitz continuous in $\bm Z$. Hence, by applying the same arguments as in the preceding analysis, we obtain the desired result.

\end{proof}

\section{Deferred Proofs Related to Sub-optimality Guarantees}

\subsection{\texorpdfstring{Proof of Sub-optimality Guarantee for~\Cref{thm:reformulation1}}{}}

\begin{theorem}
\label{thm:suboptimality1}
    Suppose the loss function in~\Cref{thm:reformulation1} is $\gamma$-strongly convex in $\bm \theta$~\eqref{ass:a3} and is $\beta$-smooth~\eqref{ass:a2}. Furthermore, assume that the loss function $\ell(\bm z, \bm \theta)$ is $L_z$-Lipschitz in $\bm z$. Then, the following suboptimality bound holds:
    \begin{equation*}
        \J_\thetaRPS(\thetaRPS)-\J_\thetaRPO(\thetaRPO) \leq  \frac{2\epsilon^2 L_z^2}{\gamma+2 \rho L}. 
    \end{equation*}
\end{theorem}

\begin{proof}
    Since $\J_\thetaRPS(\theta)$ is $(\gamma+2 \rho L)$-strongly convex in $\theta$, we have 
\begin{align*}
  \frac{\gamma+2 \rho L}{2}\left\|\thetaRPO-\thetaRPS\right\|_2^2 &\leq \J_\thetaRPS(\thetaRPO)-\J_\thetaRPS(\thetaRPS)
\end{align*}
since $(\thetaRPO-\thetaRPS)^\top\nabla \J_\thetaRPS(\thetaRPS)\geq 0$ by the optimality of $\thetaRPS$. 
Using the fact that $\J_\thetaRPS(\thetaRPS)\geq \J_\thetaRPO(\thetaRPO)$, we can further upper bound the right-hand side 
\begin{equation}
\label{eq:thm1_bound_1}
\begin{array}{rl}
\J_\thetaRPS(\thetaRPO)-\J_\thetaRPS(\thetaRPS) &\leq \J_\thetaRPS(\thetaRPO)-\J_\thetaRPO(\thetaRPO)\\
    &\leq \epsilon L_z \|\thetaRPS-\thetaRPO\|_2,
\end{array}
\end{equation}
where the second inequality holds due the  $\epsilon$-sensitivity of the distribution map $\hat{\P}(\cdot)$ and the $L_z$-Lipschitz continuity of the loss function in $\bm z$. In summary, we obtain
\begin{equation}
\label{eq:thm1_thetaRPS_vs_thetaRPO}
    \|\thetaRPS-\thetaRPO\|_2 \leq \frac{2\epsilon L_z}{\gamma+2\rho L}.
\end{equation}
Next, we derive a bound on the suboptimality of the robust performatively stable solution $\thetaRPS$. We have 
\begin{align*}
\J_\thetaRPS(\thetaRPS)-\J_\thetaRPO(\thetaRPO) &\leq \J_\thetaRPS(\thetaRPO)-\J_\thetaRPO(\thetaRPO)\\
    &\leq \epsilon L_z \|\thetaRPS-\thetaRPO\|_2\\
    &\leq \frac{2\epsilon^2 L_z^2}{\gamma+2 \rho L},
\end{align*}
where the first inequality follows from the suboptimality of $\thetaRPO$ in $\J_\thetaRPS(\theta)$, the second inequality is from \eqref{eq:thm1_bound_1}, and the last inequality is from \eqref{eq:thm1_thetaRPS_vs_thetaRPO}. 
This completes the proof. 
\end{proof}

\subsection{\texorpdfstring{Proof of Sub-optimality Guarantee for~\Cref{thm:reformulation2}}{}}

\begin{theorem}
\label{thm:suboptimality2}
    Consider the loss function $\ell(\bm Z, \bm \theta)$ from~\Cref{thm:reformulation2}. Suppose that $\bm {\overline A}_j^\top \bm {\overline A}_j \succ \bm 0$ for all $j\in[J]$, and that $\bm Y_s \succeq \bm 0$ for all $s\in[S]$. Additionally, assume that $\ell(\bm Z, \bm \theta)$ is $L_z$-Lipschitz in $\bm Z$. 
    
    Then, the suboptimality of the stable point $\thetaRPSmu$ of the smoothed robust objective satisfies:
    \begin{equation}
        \J_{\thetaRPSmu}^\mu (\thetaRPSmu)-\J_\thetaRPO(\thetaRPO) \leq  \frac{2(\epsilon L_z + 2\mu' \log J)^2}{\rho\alpha}. 
    \end{equation}
where $\alpha = 2\min_{j\in [J]} \lambda_{min}(\bm {\overline A}_j^\top \bm {\overline A}_j)$, $\mu'\in[0,1]$ is a constant that satisfies $\mu \leq \mu' \|\thetaRPSmu-\thetaRPO\|_2$, $\J_{\bm \theta_t}^\mu(\bm \theta)$ is the smoothed robust objective defined in~\eqref{eq:regularization_equivalence_3}, and $\thetaRPO$ denotes the minimizer of the original robust objective $\J_{\bm \theta_t}(\bm{\theta})$.
\end{theorem}

\begin{proof}
    By Lemma~\ref{lem:smoothing_bound}, we have for any $\bm{\theta}$,
    \begin{align}
    \label{eq:thm2_eq0}
         \J_{\bm \theta_t} (\bm \theta) \leq \J_{\bm \theta_t}^\mu(\bm \theta) \leq \J_{\bm \theta_t} (\bm \theta) + 2\mu \log J.
    \end{align}
    Additionally, from~\Cref{corl:1}, the function $\J_{\bm \theta_t}^\mu(\bm \theta)$ is $\rho \alpha$-strongly convex, with $\alpha = 2\min_{j\in [J]} \lambda_{min}(\bm {\overline A}_j^\top \bm {\overline A}_j)$. By the definition of strong convexity, we have:
    \begin{align}
    \label{eq:thm2_eq1}
    \frac{\rho \alpha}{2}\left\|\thetaRPO-\thetaRPSmu \right\|_2^2 &\leq \J_\thetaRPSmu^\mu(\thetaRPO)-\J_\thetaRPSmu^\mu(\thetaRPSmu),
    \end{align}
    where the inequality follows from the first-order optimality condition of $\bm{\theta}_{\mathrm{RPS}}^\mu$, i.e.,            \[(\thetaRPO-\thetaRPSmu)^\top\nabla \J_\thetaRPSmu^\mu(\thetaRPSmu)\geq 0.\]
    We next bound the right-hand side of~\eqref{eq:thm2_eq1}. Using the fact that $ \J_\thetaRPO(\thetaRPO) \leq \J_\thetaRPSmu(\thetaRPSmu) \leq \J_\thetaRPSmu^\mu(\thetaRPSmu)$, we have
\begin{equation}
\label{eq:thm2_bound_1}
\begin{array}{rl}
\J_\thetaRPSmu^\mu(\thetaRPO)-\J_\thetaRPSmu^\mu(\thetaRPSmu) &\leq \J_\thetaRPSmu^\mu(\thetaRPO)-\J_\thetaRPO(\thetaRPO)\\
    &\leq \J_\thetaRPSmu(\thetaRPO)-\J_\thetaRPO(\thetaRPO) +2\mu \log J\\
    &\leq \epsilon L_z \|\thetaRPSmu-\thetaRPO\|_2 + 2\mu \log J.
\end{array}
\end{equation}
where the second inequality comes from~\eqref{eq:thm2_eq0}, and the last inequality holds due to the Lipschitz continuity of $\ell(\bm{Z}, \bm{\theta})$ in $\bm{Z}$. Substituting~\eqref{eq:thm2_bound_1} into the strong convexity inequality~\eqref{eq:thm2_eq1} gives:
\begin{align}
    \label{eq:thm2_eq2}
    \frac{\rho \alpha}{2}\left\|\thetaRPO-\thetaRPSmu \right\|_2^2 &\leq \epsilon L_z \|\thetaRPSmu-\thetaRPO\|_2 + 2\mu \log J.
    \end{align}
Assuming $\mu \leq \mu' \|\thetaRPSmu-\thetaRPO\|_2$ where $\mu' \in [0,1]$, we can divide both sides by $\| \thetaRPSmu - \thetaRPO \|_2$ to obtain:
\begin{equation}
\label{eq:thm2_thetaRPS_vs_thetaRPO}
    \|\thetaRPSmu-\thetaRPO\|_2 \leq \frac{2(\epsilon L_z + 2\mu'\log J)}{\rho \alpha}.
\end{equation}
Finally, we derive a bound on the suboptimality of the robust performatively stable solution $\thetaRPSmu$: 
\begin{align*}
\J_\thetaRPSmu^\mu(\thetaRPSmu)-\J_\thetaRPO(\thetaRPO) 
&\leq \J_\thetaRPSmu^\mu(\thetaRPO)-\J_\thetaRPO(\thetaRPO)\\
    &\leq (\epsilon L_z +2\mu' \log J )\|\thetaRPSmu-\thetaRPO\|_2\\
    &\leq \frac{2(\epsilon L_z + 2\mu' \log J)^2}{\rho\alpha},
\end{align*}
where the first inequality uses suboptimality of $\thetaRPO$ in $\J_\thetaRPSmu^\mu(\bm \theta)$, the second follows from \eqref{eq:thm2_bound_1}, and the last inequality uses the bound in~\eqref{eq:thm2_thetaRPS_vs_thetaRPO}. 
This concludes the proof.
\end{proof}

\subsection{\texorpdfstring{Proof of Sub-optimality Guarantee for~\Cref{thm:reformulation3}}{}}

\begin{theorem}
   \label{thm:suboptimality3}
 Suppose that $\mcal Z$ has a finite diameter $D = \sup_{\bm z, \bm z' \in \mcal Z} \| \bm z - \bm z'\|_2 < \infty$ and $\bm \Theta$ is bounded. Assume that the loss function $\ell(\bm z, \bm \theta)$ in Theorem~\eqref{thm:reformulation3} is $L_\theta$ Lipschitz continuous in $\bm \theta$ and $L_z$-Lipschitz in $\bm z$. Let $\J_{\bm \theta_t}^\tau (\overline{\bm \theta})$ denote the objective defined in problem~\eqref{opt:thm3}, and let $\thetaRPSbartau$ denote a robust performative stable point under this objective. Then the following suboptimality bound holds:
    \begin{equation}
        \J_{\thetaRPStau}^\tau (\thetaRPSbartau)-\J_\thetaRPO(\thetaRPObar) \leq  \tau \overline{\lambda}^2 +\frac{(2 \tau \overline{\lambda} + \rho^2 + D^2+L_\theta + \epsilon L_z) 2\epsilon L_z}{\alpha}.
    \end{equation}
where $\alpha := \min(\gamma, 2\tau)$ is the strong convexity parameter, and $\overline{\lambda}$ is the upper bound on the optimal $\lambda$ as given in Lemma~\ref{lemma:bounded_lambda}.
\end{theorem}

\begin{proof}

From Lemma~\ref{lemma:strong_convexity_3}, the objective function $\J_{\bm \theta_t}^\tau (\overline{\bm \theta})$ is $\alpha$-strongly convex in $\overline{\bm{\theta}} = (\bm{\theta}, \lambda)$ with $\alpha = \min(\gamma, 2\tau)$. Applying the strong convexity inequality, we obtain:
    \begin{align*}
     \frac{\alpha}{2}\left\|\thetaRPObartau-\thetaRPSbartau\right\|_2^2 \leq \J_\thetaRPStau^\tau(\thetaRPObartau)-\J_\thetaRPStau^\tau(\thetaRPSbartau),
    \end{align*}
where the inequality follows from $(\thetaRPObartau-\thetaRPSbartau)^\top\nabla \J_\thetaRPStau^\tau(\thetaRPSbartau)\geq 0$ by the optimality of $\thetaRPSbartau$. Using the fact that $ \J^\tau_\thetaRPOtau(\thetaRPObartau) \leq \J^\tau_\thetaRPStau(\thetaRPSbartau)$, we can further upper bound the right-hand side 
\begin{equation}
\label{eq:thm3_bound_1}
\begin{array}{rl}
\J_\thetaRPStau^\tau(\thetaRPObartau)-\J_\thetaRPStau^\tau(\thetaRPSbartau) &\leq \J_\thetaRPStau^\tau(\thetaRPObartau)-\J^\tau_\thetaRPOtau(\thetaRPObartau) \\
    &\leq \epsilon L_z \|\thetaRPStau-\thetaRPOtau\|_2 \\
    &\leq \epsilon L_z \|\thetaRPSbartau-\thetaRPObartau\|_2 ,
\end{array}
\end{equation}
where the second inequality holds due the  $\epsilon$-sensitivity of the distribution map $\hat{\P}(\cdot)$ and the $L_z$-Lipschitz continuity of the loss function in $\bm z$. In summary, we obtain
\begin{equation}
\label{thm3_eq:thetaRPS_vs_thetaRPO}
    \|\thetaRPSbartau-\thetaRPObartau\|_2 \leq \frac{2\epsilon L_z}{\alpha}.
\end{equation}
Now consider the suboptimality decomposition:
\begin{equation}
\label{eq:thm3_main_bound}
\begin{aligned}
& \J_\thetaRPStau^\tau(\thetaRPSbartau) - \J_\thetaRPO(\thetaRPObar)\\
= & \left[ \J_\thetaRPStau^\tau(\thetaRPSbartau)-\J_\thetaRPOtau^\tau(\thetaRPObartau) \right]  + \left[\J_\thetaRPOtau^\tau(\thetaRPObartau) - \J_\thetaRPO(\thetaRPObar) \right].
\end{aligned}
\end{equation}
We now bound each term:
\begin{enumerate}
    \item \textbf{First term}. Before we start, we first provide a bound on $\J_\thetaRPStau^\tau(\thetaRPSbartau)-\J_\thetaRPStau^\tau(\thetaRPObartau)$. From Lemma~\eqref{lem:lipschitz_in_theta_bar}, the function $f(\bm{Z}, \overline{\bm{\theta}})$ is Lipschitz in $\overline{\bm{\theta}}$. Hence:
    \begin{equation}
        \label{thm3_bound1}
    \begin{aligned}
     & \J_\thetaRPStau^\tau(\thetaRPSbartau)-\J_\thetaRPStau^\tau(\thetaRPObartau) \\
    = & \displaystyle \E_{ \hat{\P}(\thetaRPStau)}[\tau\lambda_{\mathrm{RPS}}^2 + f(\bm Z, \thetaRPSbartau)] - \E_{ \hat{\P}(\thetaRPStau)}[\tau\lambda_{\mathrm{RPO}}^2 + f(\bm Z, \thetaRPObartau)] \\
    = & \tau |(\lambda_{\mathrm{RPS}} + \lambda_{\mathrm{RPO}})(\lambda_{\mathrm{RPS}}  - \lambda_{\mathrm{RPO}}) | + \E_{ \hat{\P}(\thetaRPStau)}[ f(\bm Z, \thetaRPSbartau) - f(\bm Z, \thetaRPObartau)] \\
    \leq & (2 \tau \overline{\lambda} + \rho^2 + D^2)|\lambda_{\mathrm{RPS}}  - \lambda_{\mathrm{RPO}}| + L_\theta \|\thetaRPStau - \thetaRPOtau \|_2.
    \end{aligned}
    \end{equation}
    where $\overline{\lambda}$ is the upper bound on the optimal $\lambda$ defined in~\eqref{eq:optimal_lambda}.
    Now we provide a bound on the first term, using the decomposition:
    \begin{equation}
    \label{eq:a}
    \begin{array}{rl}
    & \J_\thetaRPStau^\tau(\thetaRPSbartau)-\J_\thetaRPOtau^\tau(\thetaRPObartau) \\
    = & \J_\thetaRPStau^\tau(\thetaRPSbartau)-\J_\thetaRPStau^\tau(\thetaRPObartau) + \J_\thetaRPStau^\tau(\thetaRPObartau)-\J_\thetaRPOtau^\tau(\thetaRPObartau) \\
    \leq & (2 \tau \overline{\lambda} + \rho^2 + D^2)|\lambda_{\mathrm{RPS}}  - \lambda_{\mathrm{RPO}}| + L_\theta \|\thetaRPStau - \thetaRPOtau \|_2 
    + \epsilon L_z \|\thetaRPStau-\thetaRPOtau\|_2\\
    = & (2 \tau \overline{\lambda} + \rho^2 + D^2)|\lambda_{\mathrm{RPS}}  - \lambda_{\mathrm{RPO}}| + (L_\theta + \epsilon L_z) \|\thetaRPStau-\thetaRPOtau\|_2 \\
    \leq  & \displaystyle  \frac{(2 \tau \overline{\lambda} + \rho^2 + D^2+L_\theta + \epsilon L_z) 2\epsilon L_z}{\alpha}
    \end{array}
    \end{equation}
    where the first inequality comes from~\eqref{eq:thm3_bound_1} and~\eqref{thm3_bound1}, the last inequality holds because of~\eqref{thm3_eq:thetaRPS_vs_thetaRPO}.
    \item \textbf{Second term.} Using the fact that $\J_{\bm \eta}^\tau(\overline{\bm \theta})$ augments $\J_{\bm \eta}(\overline{\bm \theta})$ by $\tau \lambda^2$:
    \begin{equation}
    \label{eq:b}
    \begin{array}{rl}
    \J_\thetaRPOtau^\tau(\thetaRPObartau) - \J_\thetaRPO(\thetaRPObar)
    & \leq \J_\thetaRPO^\tau(\thetaRPObar) - \J_\thetaRPObar(\thetaRPObar) \leq \tau \overline{\lambda}^2
    \end{array}
    \end{equation}
\end{enumerate}
Substituting these into~\eqref{eq:thm3_main_bound}
, we obtain
\begin{equation*}
\begin{aligned}
\J_\thetaRPStau^\tau(\thetaRPSbartau) - \J_\thetaRPO(\thetaRPObar)
\leq & \tau \overline{\lambda}^2 +\frac{(2 \tau \overline{\lambda} + \rho^2 + D^2+L_\theta + \epsilon L_z) 2\epsilon L_z}{\alpha}.
\end{aligned}
\end{equation*}
This concludes the proof.
\end{proof}

\begin{lemma}
\label{lemma:bounded_lambda}
Suppose that $\mcal Z$ has a finite diameter $D = \sup_{\bm z, \bm z' \in \mcal Z} \| \bm z - \bm z'\|_2 < \infty$ and $\bm \Theta$ is bounded. Assume that the loss function $\ell(\bm z, \bm \theta)$ is $L_\theta$ Lipschitz continuous in $\bm \theta$ and $L_z$-Lipschitz in $\bm z$. Then there exist $\underline{\ell}, \bar{\ell} \in \R$ such that for all $(\bm{z}, \bm{\theta}) \in \mathcal{Z} \times \bm{\Theta}$, 
$$ \underline{\ell} \leq  \ell(\bm z, \bm \theta) \leq \bar{\ell}.$$ Consider the following univariate minimization problem
\begin{align}
    \label{opt:bounded_lambda}
    \displaystyle \inf_{\lambda \in \R_+}\displaystyle \rho^2 \lambda + \tau \lambda^2 + \E_{ \hat{\P}(\bm \theta_t) } [\ell_c(\bm z, \bm \theta, \lambda)] 
\end{align}
where
\begin{align*}
    \ell_c(\bm {\hat z}_s, \bm \theta, \lambda) = \sup_{\bm z \in \mcal Z} ( \ell(\bm z, \bm \theta) - \lambda \|\bm z- \bm {\hat z}_s\|_2^2).
\end{align*}
Then, the problem~\eqref{opt:bounded_lambda} admits a minimizer $\lambda^* \leq  \overline{\lambda}$, where 
\begin{align}
\label{eq:optimal_lambda}
    \overline{\lambda} = \frac{\sqrt{\rho^4 + 4\tau (\bar{\ell} -\underline{\ell})}-\rho^2 }{2\tau}.
\end{align}
\end{lemma}

\begin{proof}
First, we observe that for fixed $(\bm{\theta}, \hat{\bm{z}}_s)$, the function $\ell_c(\bm z, \bm \theta, \lambda)$ is convex and lower semi-continuous in $\lambda$, as it is the supremum of functions affine in $\lambda$. Since lower semi-continuity is preserved under expectation over a discrete distribution $\hat{\mathbb{P}}(\bm{\theta}_t)$, the objective function in \eqref{opt:bounded_lambda} is convex and lower semi-continuous in $\lambda$.

Next, we establish a lower bound. Note that for all $\lambda \geq 0$, 
\begin{align*}
    \ell_c(\bm {\hat z}_s, \bm \theta, \lambda) \geq \ell(\bm {\hat z}_s, \bm \theta) \geq \underline{\ell},
\end{align*}
since the supremum is attained at $\bm{z} = \hat{\bm{z}}_s$. Hence, we can bound the objective from below:
\begin{align*}
    \rho^2 \lambda + \tau \lambda^2 + \E_{ \hat{\P}(\bm \theta_t) } [ \ell_c(\bm {\hat z}_s, \bm \theta, \lambda) ] \geq \rho^2 \lambda + \tau \lambda^2 + \underline{\ell}. 
\end{align*}
Thus, the infimum of problem~\eqref{opt:bounded_lambda} is attained for some $\lambda^* \in [0, +\infty)$. By optimality of $\lambda^*$, we then have:
\begin{align*}
          \displaystyle \rho^2 {\lambda^*} + \tau {\lambda^*}^2 + \E_{ \hat{\P}(\bm \theta_t) } [\ell(\bm{\hat z_s}, \bm \theta)] 
         \leq & \rho^2 {\lambda^*} + \tau {\lambda^*}^2 + \E_{ \hat{\P}(\bm \theta_t)} \ell_c(\bm {\hat z}_s, \bm \theta, \lambda) \\
         \leq & \E_{ \hat{\P}(\bm \theta_t)} [\ell_c(\bm {\hat z}_s, \bm \theta, 0)] \\
         \leq & \E_{ \hat{\P}(\bm \theta_t)} [ \sup_{\bm z \in \mcal Z} \ell(\bm z, \bm \theta)] \leq \bar{\ell},
    \end{align*}
where the second inequality comes from evaluating the objective at $\lambda = 0$. Rearranging the inequality, we obtain: 
\[\rho^2 {\lambda^*} + \tau {\lambda^*}^2 \leq \bar{\ell} -  \E_{ \hat{\P}(\bm \theta_t)}[\ell(\bm {\hat z}_s, \bm \theta)] \leq \bar{\ell} -\underline{\ell}. \]
Solving the quadratic inequality yields the upper bound
\begin{align*}
    {\lambda^*} \leq \overline{\lambda} = \frac{\sqrt{\rho^4 + 4\tau (\bar{\ell} -\underline{\ell})}-\rho^2 }{2\tau}.
\end{align*}
This concludes the proof.
\end{proof}

\begin{lemma}
\label{lem:lipschitz_in_theta_bar}
Suppose $\mcal Z$ has a finite diameter $D = \sup_{\bm z, \bm z' \in \mcal Z} \| \bm z - \bm z'\|_2 < \infty$ and the loss function $\ell (\bm z, \bm \theta)$ is $L_\theta$ Lipschitz in $\bm \theta$. For $\hat{\bm{z}}_s \in \mathcal{Z}$, define the function
\begin{align}
\label{eq:lipschitz_bar}
    f(\bm {\hat z}_s, \overline{\bm \theta} ) = \sup_{\bm z\in \mcal Z} h(\bm z, \bm {\hat z}_s, \overline{\bm \theta}),
\end{align}
where $\overline{\bm{\theta}} := (\bm{\theta}, \lambda) \in \bm \Theta \times \mathbb{R}_+ := \overline{\bm \Theta}$, and 
\[
h(\bm z, \bm {\hat z}_s, \overline{\bm \theta}) =  \ell(\bm z, \bm \theta) -\lambda \| \bm z - \bm {\hat z}_s \|_2^2  + \rho^2\lambda.
\]
Then for any $\overline{\bm{\theta}} , \overline{\bm{\theta}}' \in \overline{\bm \Theta}$, the function $f$ satisfies the Lipschitz bound:
\begin{align*}
    | f(\bm {\hat z}_s, \overline{\bm \theta} ) - f(\bm {\hat z}_s, \overline{\bm \theta}' )| \leq (\rho^2 + D^2)|\lambda -\lambda' |  + L_\theta \|\bm \theta - \bm \theta' \|_2.
\end{align*}
\end{lemma}
\begin{proof}
We start with the absolute difference between the two evaluations of $f$:
\begin{align*}
    | f(\bm {\hat z}_s, \overline{\bm \theta} ) - f( \bm {\hat z}_s, \overline{\bm \theta}' )| = \max\{ f(\bm {\hat z}_s, \overline{\bm \theta} ) - f(\bm {\hat z}_s, \overline{\bm \theta}' ), f(\bm {\hat z}_s, \overline{\bm \theta}' )- f(\bm {\hat z}_s, \overline{\bm \theta} )  \}.
\end{align*}
Next, we bound the first term; the second is symmetric. Let
\begin{align*}
    \Delta =  f(\bm {\hat z}_s, \overline{\bm \theta} ) - f(\bm {\hat z}_s, \overline{\bm \theta}' ) & = \sup_{\bm z\in \mcal Z} h(\bm z, \bm {\hat z}_s, \overline{\bm \theta}) - \sup_{\bm z'\in \mcal Z} h(\bm z', \bm {\hat z}_s, \overline{\bm \theta}').
\end{align*}
Using the inequality
\begin{align*}
    \sup_{\bm z} a(\bm z) - \sup_{\bm z} b(\bm z) \leq \sup_{\bm z}(a(\bm z) - b(\bm z)),
\end{align*}
we obtain
\begin{align*}
    \Delta 
    & \leq \sup_{\bm z\in \mcal Z}  \left[ h(\bm z, \bm {\hat z}_s, \overline{\bm \theta}) - h(\bm z, \bm {\hat z}_s, \overline{\bm \theta}') \right] \\
    & = \sup_{\bm{z} \in \mathcal{Z}} \left[ \ell(\bm{z}, \bm{\theta}) - \ell(\bm{z}, \bm{\theta}') - (\lambda - \lambda') \| \bm{z} - \bm {\hat z}_s \|^2 + \rho^2 (\lambda - \lambda') \right] \\
    & \leq  \sup_{\bm z\in \mcal Z} |\ell(\bm z, \bm \theta) - \ell(\bm z, \bm \theta')| + \sup_{\bm z\in \mcal Z} |(\rho^2 - \| \bm z - \bm {\hat z}_s \|_2^2)(\lambda -\lambda')|
\end{align*}
For the first term, since $\ell$ is $L_\theta$-Lipschitz in $\bm{\theta}$:
\begin{align*}
    |\ell(\bm z, \bm \theta) - \ell(\bm z, \bm \theta')| \leq L_\theta \|\bm \theta - \bm \theta' \|_2.
\end{align*}
For the second term, observe that $\| \bm{z} - \hat{\bm{z}}_s \|_2 \leq D$ implies
\begin{align*}
    |\rho^2 - \| \bm z - \bm {\hat z}_s \|_2^2| \leq \rho^2 + D^2.
\end{align*}
Thus,
\begin{align*}
    |(\rho^2 - \| \bm z - \bm {\hat z}_s \|_2^2)(\lambda -\lambda')| \leq (\rho^2 + D^2) |\lambda -\lambda'|.
\end{align*}
By symmetry, the same bound holds for $f(\hat{\bm{z}}_s, \overline{\bm{\theta}}') - f(\hat{\bm{z}}_s, \overline{\bm{\theta}})$. This concludes the proof.
\end{proof}

\subsection{\texorpdfstring{Proof of~\Cref{thm:suboptimality_general}}{Proof of Theorem 2}}

The proof of~\Cref{thm:suboptimality_general} follows directly from the results of~\Cref{thm:suboptimality1},~\Cref{thm:suboptimality2} and~\Cref{thm:suboptimality3}.

\section{Experiment Details}
\label{sec:appendix}
\subsection{Strategic Classification}

Following~\cite{perdomo2020performative,mendler2020stochastic}, we assume that individuals have linear utilities $u(\bm \theta, \bm{\tilde x}) = -\bm \theta^\top \bm{\tilde x}$ and quadratic costs $c(\bm{\tilde x}', \bm{\tilde x}) = -\frac{1}{2\epsilon} \|\bm{\tilde x}' - \bm{\tilde x} \|_2^2$, where $\epsilon$ is a positive constant regulating the cost of altering features and thus the sensitivity of the distribution map. In other words, individuals aim to minimize their assigned probability of default but are unable to change their true outcome $\tilde y$. We select $S \subseteq [P-1]$ strategic features, such as the number of open credit lines. Each time an individual manipulates their strategic features as depicted in~\citep[Section 5]{perdomo2020performative}, the best response for an individual results in the update
\begin{align*}
    \bm{\tilde x}_S' = \bm{\tilde x}_S - \epsilon \bm \theta_S
\end{align*}
where $\bm{\tilde x}_S' , \bm{\tilde x}_S , \bm \theta_S \in \R^{|S|}$. 

\paragraph{Robust type-1.} Consider the 1-Wasserstein ball, it follows from Theorem~\eqref{thm:reformulation1}, at each time $t$, we can solve the following Tikhonov regularization problem
\begin{equation*}
\J_{\bm \theta_t}( \bm \theta) = \inf_{\bm \theta\in\bm\Theta}  \E_{ \hat{\P}(\bm \theta_t)} \left[\log  \left(1+\exp(-\bm x^\top \bm{\theta} \hat y)\right)\right] + \rho L\|(\bm \theta,1)\|_2^2 .
\end{equation*}

\paragraph{Robust type-2.} 
Consider the 2-Wasserstein ball, it follows from Proposition~\eqref{prop:log}, at each time $t$, we can solve the following problem
\begin{equation*}
\begin{aligned}
    \J_{\bm \theta_t}( \bm \theta) 
    & = \displaystyle \inf_{\lambda \in \R_+} \frac{1}{S} \displaystyle \sum_{s\in [S]} \sup_{\alpha \in (-1, 0)} \left\{ \alpha \bm {\hat x}_s^\top \bm{\theta} \hat y_s + \frac{\alpha^2}{4\lambda} \bm \theta^\top \bm \theta - h(\alpha)  \right\} + \rho^2 \lambda,
\end{aligned}
\end{equation*}
where $h(\alpha) = (\alpha +1)\log (1+\alpha) - \alpha \log(-\alpha)$.

Next, we introduce auxiliary variables $t_s \; \forall s \in [S]$ and combine with outer minimization over $\bm \theta \in \bm \Theta$, which yield the equivalent formulation for the right hand side of the above inequality
\begin{align}
\label{opt:log}
       \begin{array}{rcl}
    &\displaystyle \inf &\displaystyle   \frac{1}{S} \sum_{s\in [S]} t_s + \rho^2 \lambda \\
     &\st & \displaystyle \bm \theta \in \bm \Theta, \;\; \lambda \in \R_+,\;\; t_s\in\R\;\;\forall s\in[S]  \\
    && \displaystyle  \sup_{\alpha \in (-1, 0)} \left\{ \alpha \bm {\hat x}_s^\top \bm{\theta} \hat y_s + \frac{\alpha^2}{4\lambda} \bm \theta^\top \bm \theta - h(\alpha)  \right\} \leq t_s \;\;\forall s\in[S].
    \end{array}
\end{align}
To handle the last constraints, we discretize over some finite set $\mcal S_\alpha$, i.e.,
\begin{align*}
    \alpha \bm {\hat x}_s^\top \bm{\theta} \hat y_s + \frac{\alpha^2}{4\lambda} \bm \theta^\top \bm \theta  \hat y_s^2  - h(\alpha) \leq t_s \;\;\forall \alpha \in \mcal S_\alpha \;\;\forall s\in[S].
\end{align*}
For the experiment we choose $\mcal S_\alpha = \{-0.9,-0.8,\dots,-0.1\}$.

\begin{proposition} 
\label{prop:log}
Consider the logistic loss
\begin{align*}
    \ell(\bm z, \bm \theta) = \log \left(1+\exp(-\bm x^\top \bm{\theta} \hat y)\right),
\end{align*}
with $\bm z = (\bm x, \hat y)$ and $\bm \theta \in \mathbb{R}^d$. Consider the 2-Wasserstein ball, where the ground cost $c$ is given by the Euclidean norm on $\R^d$. Assume the support set $\mathcal{X}$ is convex and closed, then we have
\begin{align*}
    \J_{\bm \theta_t}( \bm \theta) 
    & = \displaystyle \inf_{\lambda \in \R_+} \frac{1}{S} \displaystyle \sum_{s\in [S]} \sup_{\alpha \in (-1, 0)} \left\{ \alpha \bm {\hat x}_s^\top \bm{\theta} \hat y_s + \frac{\alpha^2}{4\lambda} \bm \theta^\top \bm \theta - h(\alpha)  \right\} + \rho^2 \lambda,
\end{align*}
where $h(\alpha) = (\alpha +1)\log (1+\alpha) - \alpha \log(-\alpha)$.
\end{proposition}
\begin{proof}
We follow the argument in the proof of Theorem~\eqref{thm:reformulation3}. For fixed $\bm \theta_t$, we have
\begin{align*}
    \J_{\bm \theta_t}( \bm \theta) 
    & = \displaystyle \inf_{\lambda \in \R_+}\displaystyle \frac{1}{S} \sum_{s\in [S]}  \sup_{\bm z \in \mcal Z} \left[ \ell(\bm z, \bm \theta) - \lambda \|\bm z- \bm {\hat z}_s\|_2^2 + \rho^2 \lambda \right].
\end{align*}
where $\hat{\bm z} = \hat{\bm x} \hat y_s$. Note that label $\hat y_s$ is fixed, we can simplify the inner supremum
\begin{align*}
    \sup_{\bm x \in \mcal X} \left[ \ell(\bm x \hat y_s, \bm \theta) - \lambda \|\bm x- \bm {\hat x}_s\|_2^2 + \rho^2 \lambda \right]
\end{align*}
Next, substitute the logistic loss $$\ell(\bm x \hat y_s, \bm \theta) = \log \left(1+\exp(-\bm x^\top \bm{\theta} \hat y_s)\right).$$
Using the Fenchel conjugate dual formulation~\citep{rockafellar1970convex} of the logistic loss:
\begin{align*}
    \log(1+e^{-u}) = \sup_{\alpha \in (-1,0)} \{ \alpha u - h(\alpha) \}, \text{ where } h(\alpha) = (\alpha +1)\log (1+\alpha) - \alpha \log(-\alpha),
\end{align*}
we rewrite the inner supremum as:
     \begin{align*}
 & \sup_{\bm x \in \mathcal{X}} \log(1 + \exp(-\bm x^\top \bm \theta \hat y_s)) - \lambda \|\bm x - \hat{\bm x}_s\|_2^2\\
 =&\sup_{\bm x \in \mathcal{X}} \sup_{\alpha \in (-1, 0)} \left\{ \alpha \bm x^\top \bm \theta \hat y_s - h(\alpha) - \lambda \|\bm x - \hat{\bm x}_s\|_2^2 \right\} \\
 =&\sup_{\alpha \in (-1, 0)} \left\{ \sup_{\bm x \in \mathcal{X}} \left( \alpha \bm x^\top \bm \theta \hat y_s - \lambda \|\bm x - \hat{\bm x}_s \|_2^2 \right) - h(\alpha) \right\}.
\end{align*}
We now compute the inner supremum over $\bm x$. For fixed $\alpha$, $\hat y_s$, and $\bm \theta$, the expression
\begin{align*}
    \alpha \bm x^\top \bm \theta \hat y_s - \lambda \|\bm x - \hat{\bm x}_s \|_2^2
\end{align*}
is a concave quadratic in $\bm x$. The optimum is achieved at:
\begin{align*}
    \bm x^* = \hat{\bm x}_s + \frac{\alpha}{2\lambda} \bm \theta \hat y_s.
\end{align*}
Substituting $\bm x^*$ back in yields:
\begin{align*}
    \alpha \bm {\hat x}_s^\top \bm{\theta} \hat y_s + \frac{\alpha^2}{4\lambda} \bm \theta^\top \bm \theta.
\end{align*}
Therefore, the robust objective becomes:
\begin{align*}
    \J_{\bm \theta_t}( \bm \theta) 
    & = \displaystyle \inf_{\lambda \in \R_+} \frac{1}{S} \displaystyle \sum_{s\in [S]} \sup_{\alpha \in (-1, 0)} \left\{ \alpha \bm {\hat x}_s^\top \bm{\theta} \hat y_s + \frac{\alpha^2}{4\lambda} \bm \theta^\top \bm \theta - h(\alpha)  \right\} + \rho^2 \lambda.
\end{align*}

\end{proof}

\paragraph{Alternative minimization under KL divergence ambiguity set (AMKL)} Consider the Kullback–Leibler (KL) divergence ambiguity set. According to \citep[Proposition 3.1]{xue2024distributionally}, the distributionally robust optimization problem under KL divergence can be reformulated as:  
\begin{align*}
    \min_{\bm \theta} \inf_{\mu \geq 0}\left\{ \mu \log \E_{\hat{\P}(\bm \theta_t)}\left[ \left(1+\exp(-\bm x^\top \bm{\theta} \hat y)\right)^{1/\mu} \right]  + \mu \rho\right\},
\end{align*}
where $\rho >0$ is the radius of the ambiguity set. To solve this optimization problem, \citep{xue2024distributionally} propose an alternating minimization approach, which we refer to as AMKL. The procedure alternates between the following two steps:
\begin{enumerate}
    \item $\bm \theta$-step: Fix the robustness parameter $\mu$, and minimize the objective with respect to $\bm \theta$.
    \item $\mu$-step: Fix the model parameter $\bm \theta$, and minimize the objective with respect to $\mu$.
\end{enumerate}
This iterative process is repeated until convergence. In the $\bm \theta$-step, we apply the repeated risk minimization algorithm, following the suggestion in \citep{xue2024distributionally} that this subproblem can be addressed using any suitable performative risk minimization algorithm.

\subsection{Impact of the Robust Parameter $\texorpdfstring{\rho}{rho}$}
We investigate the impact of the robust parameter $\rho$ on out-of-sample performance. Specifically, we consider the revenue management problem described in section~\eqref{sec:revenue_management} under different quantities of perishable products, $q \in \{200, 300, 500\}$.
\begin{figure}[htb!]
     \centering
     \begin{subfigure}[b]{0.32\textwidth}
         \centering
         \includegraphics[width=\textwidth]{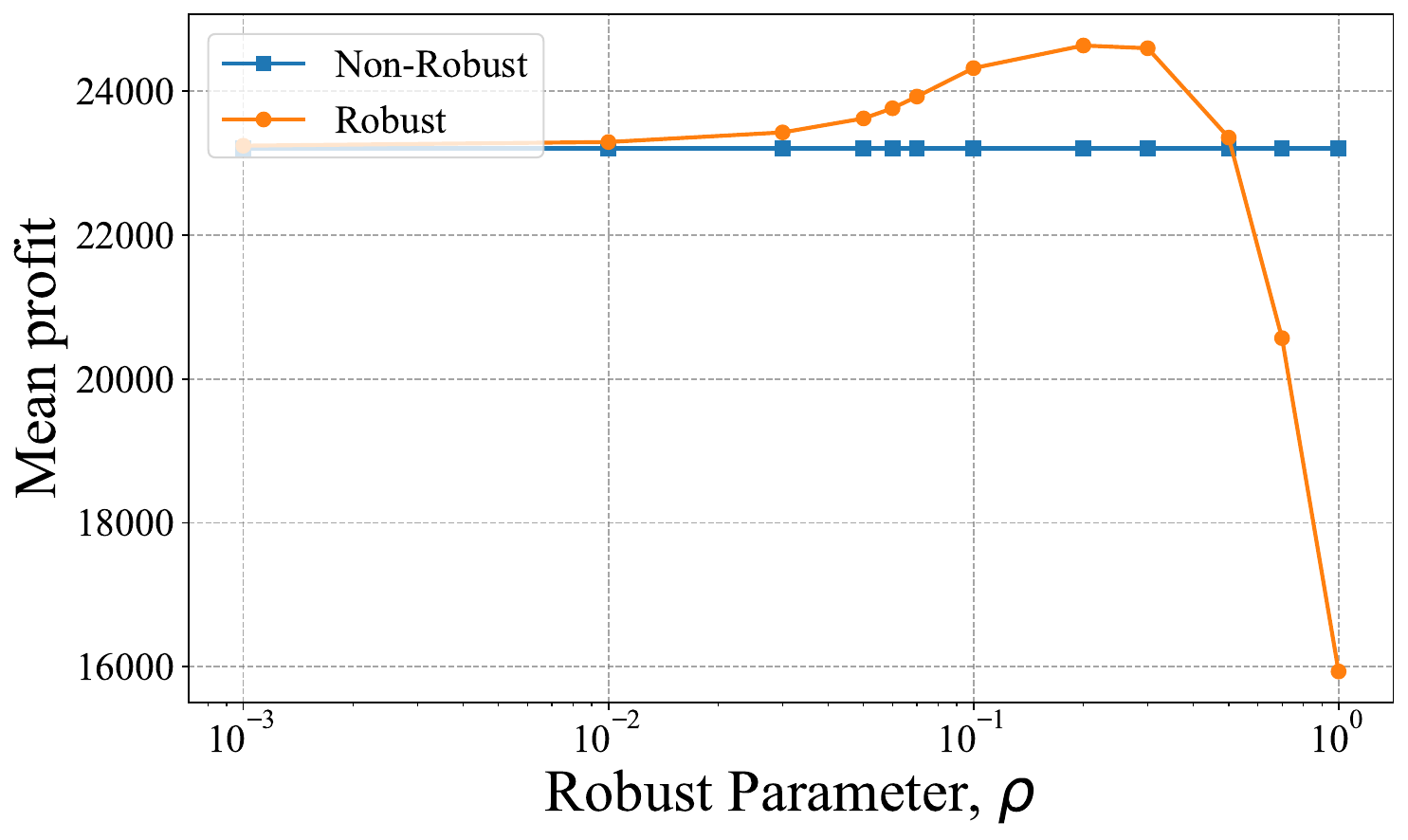}
         \caption{$q=200$}
         \label{fig:y equals x}
     \end{subfigure}
     \hfill
     \begin{subfigure}[b]{0.32\textwidth}
         \centering
         \includegraphics[width=\textwidth]{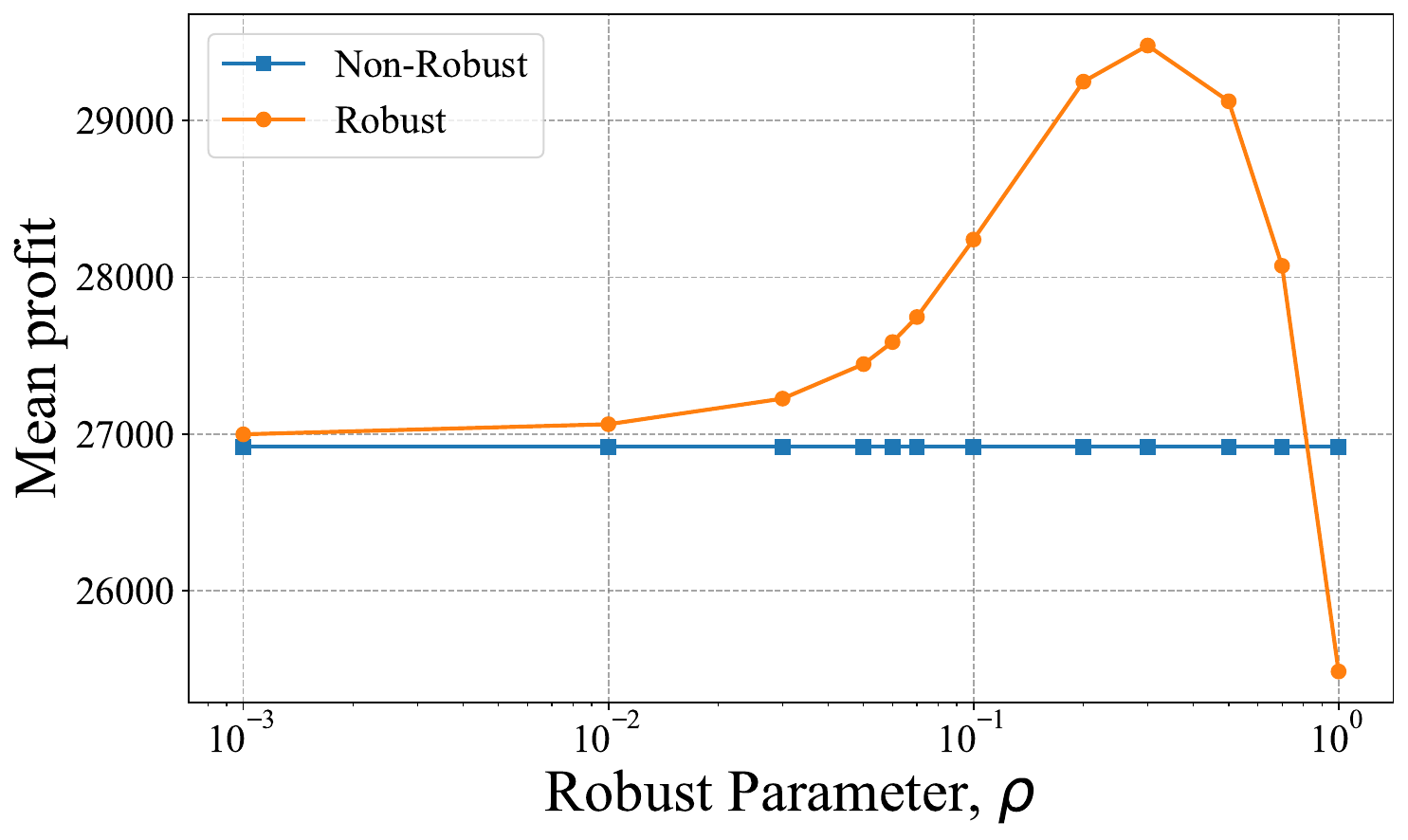}
         \caption{$q=300$}
         \label{fig:three sin x}
     \end{subfigure}
     \hfill
     \begin{subfigure}[b]{0.32\textwidth}
         \centering
         \includegraphics[width=\textwidth]{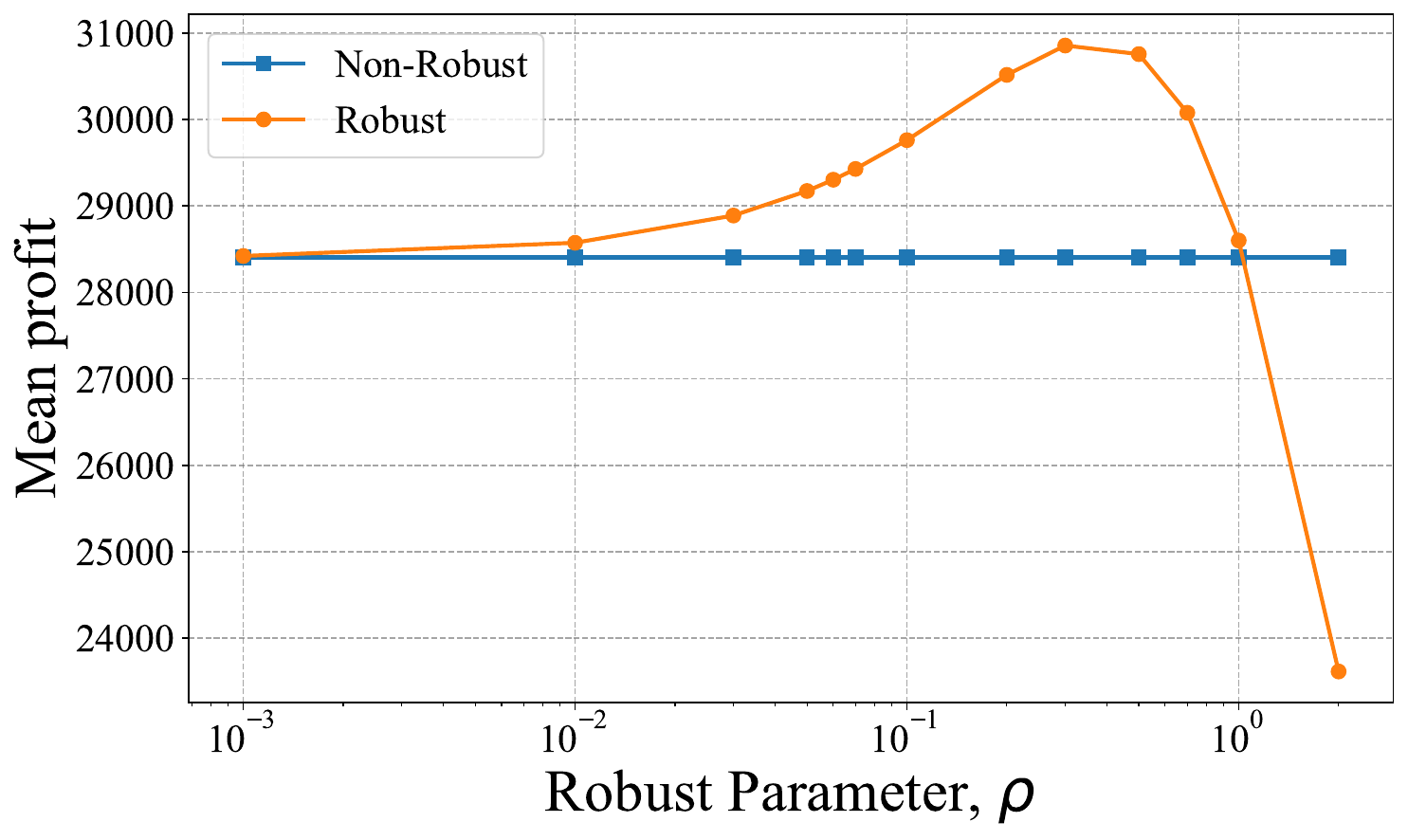}
         \caption{$q=500$}
         \label{fig:five over x}
     \end{subfigure}
     \vspace{.5em}
        \caption{Out-of-sample performance as a function of the robust parameter $\rho$ and estimated on the basis of 100 simulations.}
        \label{fig:ambiguity_size}
\end{figure}

Figure~\ref{fig:ambiguity_size} illustrates the optimal mean profit as a function of $\rho$, averaged over 100 independent simulation runs. We observe that the out-of-sample performance improves as $\rho$ increases up to a critical Wasserstein radius $\rho^*$, beyond which it begins to deteriorate. Notably, the robust model outperforms the non-robust counterpart over a wide range of~$\rho$ values. This pattern was consistently observed across all simulation settings and provides an empirical justification for adopting a distributionally robust approach.

\newpage

\section*{NeurIPS Paper Checklist}

\begin{enumerate}

\item {\bf Claims}
    \item[] Question: Do the main claims made in the abstract and introduction accurately reflect the paper's contributions and scope?
    \item[] Answer: \answerYes{} 
    \item[] Justification: Our claims are supported by both theoretical analysis and empirical results. We present experiments on the standard strategic classification benchmark, as well as two decision-making problems in revenue management and demand response portfolio optimization.
    \item[] Guidelines:
    \begin{itemize}
        \item The answer NA means that the abstract and introduction do not include the claims made in the paper.
        \item The abstract and/or introduction should clearly state the claims made, including the contributions made in the paper and important assumptions and limitations. A No or NA answer to this question will not be perceived well by the reviewers. 
        \item The claims made should match theoretical and experimental results, and reflect how much the results can be expected to generalize to other settings. 
        \item It is fine to include aspirational goals as motivation as long as it is clear that these goals are not attained by the paper. 
    \end{itemize}

\item {\bf Limitations}
    \item[] Question: Does the paper discuss the limitations of the work performed by the authors?
    \item[] Answer: \answerYes{} 
    \item[] Justification: Limitations are discussed in the conclusion section.
    \item[] Guidelines:
    \begin{itemize}
        \item The answer NA means that the paper has no limitation while the answer No means that the paper has limitations, but those are not discussed in the paper. 
        \item The authors are encouraged to create a separate "Limitations" section in their paper.
        \item The paper should point out any strong assumptions and how robust the results are to violations of these assumptions (e.g., independence assumptions, noiseless settings, model well-specification, asymptotic approximations only holding locally). The authors should reflect on how these assumptions might be violated in practice and what the implications would be.
        \item The authors should reflect on the scope of the claims made, e.g., if the approach was only tested on a few datasets or with a few runs. In general, empirical results often depend on implicit assumptions, which should be articulated.
        \item The authors should reflect on the factors that influence the performance of the approach. For example, a facial recognition algorithm may perform poorly when image resolution is low or images are taken in low lighting. Or a speech-to-text system might not be used reliably to provide closed captions for online lectures because it fails to handle technical jargon.
        \item The authors should discuss the computational efficiency of the proposed algorithms and how they scale with dataset size.
        \item If applicable, the authors should discuss possible limitations of their approach to address problems of privacy and fairness.
        \item While the authors might fear that complete honesty about limitations might be used by reviewers as grounds for rejection, a worse outcome might be that reviewers discover limitations that aren't acknowledged in the paper. The authors should use their best judgment and recognize that individual actions in favor of transparency play an important role in developing norms that preserve the integrity of the community. Reviewers will be specifically instructed to not penalize honesty concerning limitations.
    \end{itemize}

\item {\bf Theory assumptions and proofs}
    \item[] Question: For each theoretical result, does the paper provide the full set of assumptions and a complete (and correct) proof?
    \item[] Answer: \answerYes{} 
    \item[] Justification: All theoretical results have been proven rigorously with assumptions clearly stated. 
    \item[] Guidelines:
    \begin{itemize}
        \item The answer NA means that the paper does not include theoretical results. 
        \item All the theorems, formulas, and proofs in the paper should be numbered and cross-referenced.
        \item All assumptions should be clearly stated or referenced in the statement of any theorems.
        \item The proofs can either appear in the main paper or the supplemental material, but if they appear in the supplemental material, the authors are encouraged to provide a short proof sketch to provide intuition. 
        \item Inversely, any informal proof provided in the core of the paper should be complemented by formal proofs provided in appendix or supplemental material.
        \item Theorems and Lemmas that the proof relies upon should be properly referenced. 
    \end{itemize}

    \item {\bf Experimental result reproducibility}
    \item[] Question: Does the paper fully disclose all the information needed to reproduce the main experimental results of the paper to the extent that it affects the main claims and/or conclusions of the paper (regardless of whether the code and data are provided or not)?
    \item[] Answer: \answerYes{} 
    \item[] Justification: The main algorithm and the formulations have been clearly described. The details for experiments are included in the main text and appendix. 
    \item[] Guidelines:
    \begin{itemize}
        \item The answer NA means that the paper does not include experiments.
        \item If the paper includes experiments, a No answer to this question will not be perceived well by the reviewers: Making the paper reproducible is important, regardless of whether the code and data are provided or not.
        \item If the contribution is a dataset and/or model, the authors should describe the steps taken to make their results reproducible or verifiable. 
        \item Depending on the contribution, reproducibility can be accomplished in various ways. For example, if the contribution is a novel architecture, describing the architecture fully might suffice, or if the contribution is a specific model and empirical evaluation, it may be necessary to either make it possible for others to replicate the model with the same dataset, or provide access to the model. In general. releasing code and data is often one good way to accomplish this, but reproducibility can also be provided via detailed instructions for how to replicate the results, access to a hosted model (e.g., in the case of a large language model), releasing of a model checkpoint, or other means that are appropriate to the research performed.
        \item While NeurIPS does not require releasing code, the conference does require all submissions to provide some reasonable avenue for reproducibility, which may depend on the nature of the contribution. For example
        \begin{enumerate}
            \item If the contribution is primarily a new algorithm, the paper should make it clear how to reproduce that algorithm.
            \item If the contribution is primarily a new model architecture, the paper should describe the architecture clearly and fully.
            \item If the contribution is a new model (e.g., a large language model), then there should either be a way to access this model for reproducing the results or a way to reproduce the model (e.g., with an open-source dataset or instructions for how to construct the dataset).
            \item We recognize that reproducibility may be tricky in some cases, in which case authors are welcome to describe the particular way they provide for reproducibility. In the case of closed-source models, it may be that access to the model is limited in some way (e.g., to registered users), but it should be possible for other researchers to have some path to reproducing or verifying the results.
        \end{enumerate}
    \end{itemize}

\item {\bf Open access to data and code}
    \item[] Question: Does the paper provide open access to the data and code, with sufficient instructions to faithfully reproduce the main experimental results, as described in supplemental material?
    \item[] Answer: \answerYes{} 
    \item[] Justification: The data is from a public dataset~\citep{creditdata} or generated synthetically (see details in Section 4). The code is available upon request.
    \item[] Guidelines:
    \begin{itemize}
        \item The answer NA means that paper does not include experiments requiring code.
        \item Please see the NeurIPS code and data submission guidelines (\url{https://nips.cc/public/guides/CodeSubmissionPolicy}) for more details.
        \item While we encourage the release of code and data, we understand that this might not be possible, so “No” is an acceptable answer. Papers cannot be rejected simply for not including code, unless this is central to the contribution (e.g., for a new open-source benchmark).
        \item The instructions should contain the exact command and environment needed to run to reproduce the results. See the NeurIPS code and data submission guidelines (\url{https://nips.cc/public/guides/CodeSubmissionPolicy}) for more details.
        \item The authors should provide instructions on data access and preparation, including how to access the raw data, preprocessed data, intermediate data, and generated data, etc.
        \item The authors should provide scripts to reproduce all experimental results for the new proposed method and baselines. If only a subset of experiments are reproducible, they should state which ones are omitted from the script and why.
        \item At submission time, to preserve anonymity, the authors should release anonymized versions (if applicable).
        \item Providing as much information as possible in supplemental material (appended to the paper) is recommended, but including URLs to data and code is permitted.
    \end{itemize}

\item {\bf Experimental setting/details}
    \item[] Question: Does the paper specify all the training and test details (e.g., data splits, hyperparameters, how they were chosen, type of optimizer, etc.) necessary to understand the results?
    \item[] Answer: \answerYes{} 
    \item[] Justification: Experimental details have been discussed in the Experiments section as well as in the Appendix.
    \item[] Guidelines:
    \begin{itemize}
        \item The answer NA means that the paper does not include experiments.
        \item The experimental setting should be presented in the core of the paper to a level of detail that is necessary to appreciate the results and make sense of them.
        \item The full details can be provided either with the code, in appendix, or as supplemental material.
    \end{itemize}

\item {\bf Experiment statistical significance}
    \item[] Question: Does the paper report error bars suitably and correctly defined or other appropriate information about the statistical significance of the experiments?
    \item[] Answer: \answerYes{} 
    \item[] Justification: We provide box plots for the out-of-sample performances. 
    \item[] Guidelines:
    \begin{itemize}
        \item The answer NA means that the paper does not include experiments.
        \item The authors should answer "Yes" if the results are accompanied by error bars, confidence intervals, or statistical significance tests, at least for the experiments that support the main claims of the paper.
        \item The factors of variability that the error bars are capturing should be clearly stated (for example, train/test split, initialization, random drawing of some parameter, or overall run with given experimental conditions).
        \item The method for calculating the error bars should be explained (closed form formula, call to a library function, bootstrap, etc.)
        \item The assumptions made should be given (e.g., Normally distributed errors).
        \item It should be clear whether the error bar is the standard deviation or the standard error of the mean.
        \item It is OK to report 1-sigma error bars, but one should state it. The authors should preferably report a 2-sigma error bar than state that they have a 96\% CI, if the hypothesis of Normality of errors is not verified.
        \item For asymmetric distributions, the authors should be careful not to show in tables or figures symmetric error bars that would yield results that are out of range (e.g. negative error rates).
        \item If error bars are reported in tables or plots, The authors should explain in the text how they were calculated and reference the corresponding figures or tables in the text.
    \end{itemize}

\item {\bf Experiments compute resources}
    \item[] Question: For each experiment, does the paper provide sufficient information on the computer resources (type of compute workers, memory, time of execution) needed to reproduce the experiments?
    \item[] Answer: \answerYes{} 
    \item[] Justification: Computational resources are provided in the Experiments section. 
    \item[] Guidelines:
    \begin{itemize}
        \item The answer NA means that the paper does not include experiments.
        \item The paper should indicate the type of compute workers CPU or GPU, internal cluster, or cloud provider, including relevant memory and storage.
        \item The paper should provide the amount of compute required for each of the individual experimental runs as well as estimate the total compute. 
        \item The paper should disclose whether the full research project required more compute than the experiments reported in the paper (e.g., preliminary or failed experiments that didn't make it into the paper). 
    \end{itemize}
    
\item {\bf Code of ethics}
    \item[] Question: Does the research conducted in the paper conform, in every respect, with the NeurIPS Code of Ethics \url{https://neurips.cc/public/EthicsGuidelines}?
    \item[] Answer: \answerYes{} 
    \item[] Justification: We have read and acknowledged the NeurIPS Code of Ethics.
    \item[] Guidelines:
    \begin{itemize}
        \item The answer NA means that the authors have not reviewed the NeurIPS Code of Ethics.
        \item If the authors answer No, they should explain the special circumstances that require a deviation from the Code of Ethics.
        \item The authors should make sure to preserve anonymity (e.g., if there is a special consideration due to laws or regulations in their jurisdiction).
    \end{itemize}

\item {\bf Broader impacts}
    \item[] Question: Does the paper discuss both potential positive societal impacts and negative societal impacts of the work performed?
    \item[] Answer: \answerYes{} 
    \item[] Justification: We discuss societal impacts in the conclusion. Our framework extends the scope of performative prediction beyond its original focus, enabling its application to a wider range of decision-making problems. In high-stakes settings, adopting a distributionally robust optimization perspective allows our approach to prioritize safe and reliable deployment in the presence of uncertainty and potential adversarial conditions. 
    \item[] Guidelines:
    \begin{itemize}
        \item The answer NA means that there is no societal impact of the work performed.
        \item If the authors answer NA or No, they should explain why their work has no societal impact or why the paper does not address societal impact.
        \item Examples of negative societal impacts include potential malicious or unintended uses (e.g., disinformation, generating fake profiles, surveillance), fairness considerations (e.g., deployment of technologies that could make decisions that unfairly impact specific groups), privacy considerations, and security considerations.
        \item The conference expects that many papers will be foundational research and not tied to particular applications, let alone deployments. However, if there is a direct path to any negative applications, the authors should point it out. For example, it is legitimate to point out that an improvement in the quality of generative models could be used to generate deepfakes for disinformation. On the other hand, it is not needed to point out that a generic algorithm for optimizing neural networks could enable people to train models that generate Deepfakes faster.
        \item The authors should consider possible harms that could arise when the technology is being used as intended and functioning correctly, harms that could arise when the technology is being used as intended but gives incorrect results, and harms following from (intentional or unintentional) misuse of the technology.
        \item If there are negative societal impacts, the authors could also discuss possible mitigation strategies (e.g., gated release of models, providing defenses in addition to attacks, mechanisms for monitoring misuse, mechanisms to monitor how a system learns from feedback over time, improving the efficiency and accessibility of ML).
    \end{itemize}
    
\item {\bf Safeguards}
    \item[] Question: Does the paper describe safeguards that have been put in place for responsible release of data or models that have a high risk for misuse (e.g., pretrained language models, image generators, or scraped datasets)?
    \item[] Answer: \answerNA{} 
    \item[] Justification: No data or models are released.
    \item[] Guidelines:
    \begin{itemize}
        \item The answer NA means that the paper poses no such risks.
        \item Released models that have a high risk for misuse or dual-use should be released with necessary safeguards to allow for controlled use of the model, for example by requiring that users adhere to usage guidelines or restrictions to access the model or implementing safety filters. 
        \item Datasets that have been scraped from the Internet could pose safety risks. The authors should describe how they avoided releasing unsafe images.
        \item We recognize that providing effective safeguards is challenging, and many papers do not require this, but we encourage authors to take this into account and make a best faith effort.
    \end{itemize}

\item {\bf Licenses for existing assets}
    \item[] Question: Are the creators or original owners of assets (e.g., code, data, models), used in the paper, properly credited and are the license and terms of use explicitly mentioned and properly respected?
    \item[] Answer: \answerYes{} 
    \item[] Justification: The creators of the existing dataset are properly credited in Section 4 and in the reference~\citep{creditdata}. We have cited the benchmark algorithms in the Experiments section.
    \item[] Guidelines:
    \begin{itemize}
        \item The answer NA means that the paper does not use existing assets.
        \item The authors should cite the original paper that produced the code package or dataset.
        \item The authors should state which version of the asset is used and, if possible, include a URL.
        \item The name of the license (e.g., CC-BY 4.0) should be included for each asset.
        \item For scraped data from a particular source (e.g., website), the copyright and terms of service of that source should be provided.
        \item If assets are released, the license, copyright information, and terms of use in the package should be provided. For popular datasets, \url{paperswithcode.com/datasets} has curated licenses for some datasets. Their licensing guide can help determine the license of a dataset.
        \item For existing datasets that are re-packaged, both the original license and the license of the derived asset (if it has changed) should be provided.
        \item If this information is not available online, the authors are encouraged to reach out to the asset's creators.
    \end{itemize}

\item {\bf New assets}
    \item[] Question: Are new assets introduced in the paper well documented and is the documentation provided alongside the assets?
    \item[] Answer: \answerNA{} 
    \item[] Justification: The paper does not introduce new assets.
    \item[] Guidelines:
    \begin{itemize}
        \item The answer NA means that the paper does not release new assets.
        \item Researchers should communicate the details of the dataset/code/model as part of their submissions via structured templates. This includes details about training, license, limitations, etc. 
        \item The paper should discuss whether and how consent was obtained from people whose asset is used.
        \item At submission time, remember to anonymize your assets (if applicable). You can either create an anonymized URL or include an anonymized zip file.
    \end{itemize}

\item {\bf Crowdsourcing and research with human subjects}
    \item[] Question: For crowdsourcing experiments and research with human subjects, does the paper include the full text of instructions given to participants and screenshots, if applicable, as well as details about compensation (if any)? 
    \item[] Answer: \answerNA{} 
    \item[] Justification: The paper does not involve crowdsourcing nor research with human subjects.
    \item[] Guidelines:
    \begin{itemize}
        \item The answer NA means that the paper does not involve crowdsourcing nor research with human subjects.
        \item Including this information in the supplemental material is fine, but if the main contribution of the paper involves human subjects, then as much detail as possible should be included in the main paper. 
        \item According to the NeurIPS Code of Ethics, workers involved in data collection, curation, or other labor should be paid at least the minimum wage in the country of the data collector. 
    \end{itemize}

\item {\bf Institutional review board (IRB) approvals or equivalent for research with human subjects}
    \item[] Question: Does the paper describe potential risks incurred by study participants, whether such risks were disclosed to the subjects, and whether Institutional Review Board (IRB) approvals (or an equivalent approval/review based on the requirements of your country or institution) were obtained?
    \item[] Answer: \answerNA{} 
    \item[] Justification: The paper does not involve crowdsourcing nor research with human subjects.
    \item[] Guidelines:
    \begin{itemize}
        \item The answer NA means that the paper does not involve crowdsourcing nor research with human subjects.
        \item Depending on the country in which research is conducted, IRB approval (or equivalent) may be required for any human subjects research. If you obtained IRB approval, you should clearly state this in the paper. 
        \item We recognize that the procedures for this may vary significantly between institutions and locations, and we expect authors to adhere to the NeurIPS Code of Ethics and the guidelines for their institution. 
        \item For initial submissions, do not include any information that would break anonymity (if applicable), such as the institution conducting the review.
    \end{itemize}

\item {\bf Declaration of LLM usage}
    \item[] Question: Does the paper describe the usage of LLMs if it is an important, original, or non-standard component of the core methods in this research? Note that if the LLM is used only for writing, editing, or formatting purposes and does not impact the core methodology, scientific rigorousness, or originality of the research, declaration is not required.
    \item[] Answer: \answerNA{} 
    \item[] Justification: The paper does not use LLMs for important, original, or non-standard component of the core methods in this research.
    \item[] Guidelines:
    \begin{itemize}
        \item The answer NA means that the core method development in this research does not involve LLMs as any important, original, or non-standard components.
        \item Please refer to our LLM policy (\url{https://neurips.cc/Conferences/2025/LLM}) for what should or should not be described.
    \end{itemize}

\end{enumerate}


\end{document}